\apptocmd{\thebibliography}{\raggedright}{}{}
\patchcmd{\@maketitle}{\global\topskip42\p@\relax}
  {\global\topskip42\p@\relax \vspace*{-38pt}}
  {}{}
\renewcommand*{\backref}[1]{}
\renewcommand*{\backrefalt}[4]{%
    \ifcase #1 (Not cited.)%
    \or        (Cited on page~#2.)%
    \else      (Cited on pages~#2.)%
    \fi}
\newcommand{\arxiv}[1]{\href{http://arxiv.org/abs/#1}{{\tt arXiv:#1}}}
\numberwithin{equation}{section}
\theoremstyle{plain}
\newtheorem{theorem}{Theorem}[section]
\newtheorem{maintheorem}{Theorem}
\newtheorem{maintheoremprime}{Theorem}
\newtheorem{proposition}[theorem]{Proposition}
\newtheorem{lemma}[theorem]{Lemma}
\newtheorem{conjecture}[theorem]{Conjecture}
\newtheorem{question}[theorem]{Question}
\theoremstyle{definition}
\newtheorem{asm}[theorem]{Assumption}
\newtheorem{defn}[theorem]{Definition}
\newtheorem{notn}[theorem]{Notation}
\theoremstyle{remark}
\newtheorem{rmk}[theorem]{Remark}
\newenvironment{remark}[1][]{\begin{rmk}[#1] \pushQED{\qed}}{\popQED \end{rmk}}
\newtheorem{eg}[theorem]{Example}
\DeclareMathOperator{\Hom}{Hom}
\DeclareMathOperator{\End}{End}
\DeclareMathOperator{\Sp}{Sp}
\newcommand\Z{\ensuremath{\mathbb{Z}}}
\newcommand\Q{\ensuremath{\mathbb{Q}}}
\DeclareMathOperator{\HH}{H}
\DeclareMathOperator{\lcm}{lcm}
\newcommand\Span[1]{\ensuremath{\langle #1 \rangle}}
\newcommand\Set[2]{\ensuremath{\left\{\text{#1 $|$ #2}\right\}}}
\newcommand\SpanSet[2]{\ensuremath{\left\langle \text{#1 $|$ #2} \right\rangle}}
\newcommand\cM{\ensuremath{\mathcal{M}}}
\newcommand\cP{\ensuremath{\mathcal{P}}}
\newcommand\cU{\ensuremath{\mathcal{U}}}
\newcommand\bD{\ensuremath{\mathbf{D}}}
\newcommand\bG{\ensuremath{\mathbf{G}}}
\newcommand\bc{\ensuremath{\mathbf{c}}}
\newcommand\bu{\ensuremath{\mathbf{u}}}
\newcommand\bv{\ensuremath{\mathbf{v}}}
\newcommand\bx{\ensuremath{\mathbf{x}}}
\newcommand\tS{\ensuremath{\widetilde{S}}}
\newcommand\tphi{\ensuremath{\widetilde{\phi}}}
\newcommand\talpha{\ensuremath{\widetilde{\alpha}}}
\newcommand\tbeta{\ensuremath{\widetilde{\beta}}}
\newcommand\tgamma{\ensuremath{\widetilde{\gamma}}}
\newcommand\tdelta{\ensuremath{\widetilde{\delta}}}
\newcommand\ttau{\ensuremath{\widetilde{\tau}}}
\newcommand\tSigma{\ensuremath{\widetilde{\Sigma}}}
\newcommand\tcU{\ensuremath{\widetilde{\cU}}}
\newcommand\oP{\ensuremath{\overline{P}}}
\newcommand\ou{\ensuremath{\overline{u}}}
\newcommand\ov{\ensuremath{\overline{v}}}
\newcommand\hell{\ensuremath{\widehat{\ell}}}
\newcommand\obu{\ensuremath{\overline{\bu}}}
\newcommand\obv{\ensuremath{\overline{\bv}}}
\newcommand\hobu{\ensuremath{\widehat{\obu}}}
\newcommand\hobv{\ensuremath{\widehat{\obv}}}
\DeclareMathOperator{\Mod}{Mod}
\DeclareMathOperator{\scc}{scc}
\DeclareMathOperator{\pant}{pant}
\newcommand\HHScc{\ensuremath{\HH^{\scc}}}
\newcommand\HHPants{\ensuremath{\HH^{\pant}}}
\title[Generating the homology of covers of surfaces]{Generating the homology of covers of surfaces}
\subjclass[2020]{Primary 57K20; Secondary 57M10, 57M12}
\author{Marco Boggi}
\address{UFF - Instituto de Matem\'atica e Estat\'{\i}stica -
Niter\'oi - RJ 24210-200; Brazil}
\email{marco.boggi@gmail.com}
\author{Andrew Putman}
\address{Dept of Mathematics; Univ of Notre Dame; 255 Hurley Hall; Notre Dame, IN 46556; USA}
\email{andyp@nd.edu}
\thanks{AP was supported in part by NSF grant DMS-1811322.}
\author{Nick Salter}
\address{Dept of Mathematics; Univ of Notre Dame; 255 Hurley Hall; Notre Dame, IN 46556; USA}
\email{nsalter@nd.edu}
\thanks{NS was supported in part by NSF grant DMS-2153879.} 
\date{January 22, 2024}
\begin{document}

\newpage

\begin{abstract}
Putman and Wieland conjectured that if $\tSigma \rightarrow \Sigma$ is a finite branched cover between closed
oriented surfaces of sufficiently high genus, then the orbits of all nonzero elements of $\HH_1(\tSigma;\Q)$ under the action of lifts
to $\tSigma$ of mapping classes on $\Sigma$ are infinite.  We prove that this holds if $\HH_1(\tSigma;\Q)$ is
generated by the homology classes of lifts of simple closed curves on $\Sigma$.  We also prove that the subspace
of $\HH_1(\tSigma;\Q)$ spanned by such lifts is a symplectic subspace.  Finally, simple closed curves lie
on subsurfaces homeomorphic to $2$-holed spheres, and we prove that $\HH_1(\tSigma;\Q)$ is generated
by the homology classes of lifts of loops on $\Sigma$ lying on subsurfaces homeomorphic to $3$-holed
spheres.
\end{abstract}

\maketitle
\thispagestyle{empty}

\section{Introduction}

Let $\pi\colon \tSigma \rightarrow \Sigma$ be a finite branched cover between closed oriented surfaces.
The homology of $\tSigma$ encodes subtle information about the mapping class group of $\Sigma$, and
over the last decade has been intensely studied \cite{GrunewaldEtAl, Klukowski, KoberdaSantharoubane, LandesmanLitt1, LandesmanLitt2, LandesmanLitt3, LooijengaPrym, MalesteinPutman, Markovic, MarkovicTosic, PutmanWieland}.  
Much of this is motivated by a conjecture of Putman--Wieland \cite{PutmanWieland} we discuss
below.  In this note, we prove this conjecture for covers $\tSigma$ such that $\HH_1(\tSigma;\Q)$ is
generated by certain simple elements, and also prove that in general $\HH_1(\tSigma;\Q)$ is generated
by slightly more complicated elements.

\subsection{Putman--Wieland conjecture}

Mark $\Sigma$ at each branch point of the branched cover $\pi\colon \tSigma \rightarrow \Sigma$. 
Let $\Mod(\Sigma)$ be the pure mapping class group of $\Sigma$, i.e., the group
of isotopy classes of orientation-preserving homeomorphisms of $\Sigma$ that fix each marked point.  There
is a finite-index subgroup $\Mod(\Sigma,\tSigma)$ of $\Mod(\Sigma)$ that can be lifted to $\tSigma$ to give a well-defined
action of $\Mod(\Sigma,\tSigma)$ on $\HH_1(\tSigma;\Q)$.  Putman--Wieland \cite{PutmanWieland} made the following conjecture.

\begin{conjecture}[\cite{PutmanWieland}]
\label{conjecture:putmanwieland}
Let the notation be as above, and assume that the genus of $\Sigma$ is sufficiently large.\footnote{In \cite{PutmanWieland} they
further conjecture that this holds if the genus is at least $2$, but counterexamples in genus $2$ were found
by Markovi\'{c} \cite{Markovic}.}  Consider some nonzero $\vec{v} \in \HH_1(\tSigma;\Q)$.  Then 
the $\Mod(\Sigma,\tSigma)$-orbit of $\vec{v}$ is infinite.
\end{conjecture}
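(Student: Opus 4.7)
The plan is to prove the conjecture under the hypothesis, highlighted in the abstract, that $\HH_1(\tSigma;\Q)$ is generated by the homology classes of lifts of simple closed curves on $\Sigma$. Let
\[
V = \bigl\{\vec{v} \in \HH_1(\tSigma;\Q) : \text{the $\Mod(\Sigma,\tSigma)$-orbit of $\vec{v}$ is finite}\bigr\}.
\]
This is a $\Q$-subspace preserved by $\Mod(\Sigma,\tSigma)$, and the conjecture is equivalent to showing $V = 0$.

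The key tool is the transvection formula for lifted Dehn twists. For any simple closed curve $c \subset \Sigma$, a sufficiently high power of the Dehn twist about $c$ lies in $\Mod(\Sigma,\tSigma)$, and its lift to $\tSigma$ acts on $\HH_1(\tSigma;\Q)$ as a product of transvections along the disjoint lifts $\tilde c_1,\ldots,\tilde c_r$ of $c$, sending $\vec{v}$ to $\vec{v} + \sum_i n_i \langle \vec{v}, [\tilde c_i]\rangle\, [\tilde c_i]$ for positive integer weights $n_i$ coming from the ramification. For $\vec{v}\in V$ the orbit under this lift is finite, which forces the symmetrized vanishing
\[
\sum_{i=1}^r n_i \langle \vec{v}, [\tilde c_i]\rangle\, [\tilde c_i] = 0.
\]
The harder step is to upgrade this to the pointwise orthogonality $\langle \vec{v}, [\tilde c_i]\rangle = 0$ for every $i$. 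Here one exploits the permutation action on the lifts of $c$ provided by elements of $\Mod(\Sigma,\tSigma)$ supported on subsurfaces disjoint from $c$ (and, when the cover is regular, by the deck group), together with the $\Mod(\Sigma,\tSigma)$-invariance of $V$. Combining the symmetrized relations obtained from many such permutations, and varying $c$ within each topological type via the change-of-coordinates principle, allows one to disentangle the individual pairings. The outcome is that every $\vec v \in V$ is symplectically orthogonal to every lift of every simple closed curve on $\Sigma$.

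Finally, since by hypothesis the lifts of simple closed curves span $\HH_1(\tSigma;\Q)$, the nondegeneracy of the algebraic intersection form forces $V=0$, establishing the conjecture for such covers. The main obstacle is the separation step: the lifts $\tilde c_i$ may be linearly dependent in $\HH_1(\tSigma;\Q)$ and the weights $n_i$ need not be equal, so the symmetrized identity genuinely mixes information across the fibre over $c$, and extracting pointwise orthogonality requires careful coordination of the mapping class action with the symmetries of the cover.
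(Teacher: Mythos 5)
Your overall architecture matches the paper's: reduce to showing that a vector with finite orbit is $\omega$-orthogonal to every lift of every simple closed curve, and then invoke the spanning hypothesis $\HHScc_1(\tSigma;\Q) = \HH_1(\tSigma;\Q)$ together with nondegeneracy of $\omega$. The transvection formula and the deduction of the symmetrized relation $\sum_i n_i \omega(\vec v, [\tilde c_i])\,[\tilde c_i] = 0$ from finiteness of the orbit are both correct (the latter because the lifted twist fixes each $[\tilde c_i]$, so its $n$-th power sends $\vec v$ to $\vec v + n\vec w$).

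However, the step you yourself flag as ``the harder step'' is a genuine gap, and the strategy you sketch for it --- permuting the lifts of $c$ by mapping classes supported off $c$, or by the deck group in the regular case, and combining many symmetrized relations --- is neither carried out nor the right tool: for irregular covers there is no deck group acting transitively on the lifts, the weights $n_i$ can differ from lift to lift, and there is no reason the available permutations are rich enough to disentangle the individual pairings. The paper closes this gap with a one-line positivity argument: apply $\omega(\vec v, -)$ to the symmetrized relation to obtain $\sum_i n_i\, \omega(\vec v, [\tilde c_i])^2 = 0$; since each $n_i \geq 1$ and the squares are nonnegative rationals, every $\omega(\vec v, [\tilde c_i])$ vanishes. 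This is exactly the content of Lemma \ref{lemma:twistfixed}, which identifies the common fixed space of the lifted twists with $\bigl(\HHScc_1(\tSigma;\Q)\bigr)^{\perp}$. With that substitution your argument goes through; in fact the same computation shows directly that if $\omega(\vec v,[\tilde c_i]) \neq 0$ for some $i$ then $\vec w \neq 0$, so already the orbit of $\vec v$ under that single lifted twist is infinite, and no auxiliary invariant subspace $V$ or coordination with symmetries of the cover is needed.
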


The main theorem of \cite{PutmanWieland} says that this holds if and only if the virtual first Betti number
of the mapping class group is $0$ when the genus is sufficiently large, which is a well-known conjecture
of Ivanov \cite{IvanovConjecture}.

\subsection{Simple closed curve homology}

To prove Conjecture \ref{conjecture:putmanwieland}, it is natural to try to find generators
for $\HH_1(\tSigma;\Q)$.
A first idea is that $\HH_1(\tSigma;\Q)$ might be generated by lifts of simple closed curves.  Define
the {\em simple closed curve homology} of $\tSigma$, denoted $\HHScc_1(\tSigma;\Q)$, to be the subspace
of $\HH_1(\tSigma;\Q)$ spanned by the homology classes of loops $\tgamma$ on $\tSigma$ that avoid the branch
points and project to simple closed curves $\gamma$ on $\Sigma$.  The restriction of the branched covering map
$\pi\colon \tSigma \rightarrow \Sigma$ to $\tgamma$ is thus a possibly nontrivial cover
$\tgamma \rightarrow \gamma$.  

Unfortunately, this need not be all of $\HH_1(\tSigma;\Q)$.  For all closed surfaces $\Sigma$ with $\pi_1(\Sigma)$ nonabelian,
Malestein--Putman \cite[Theorem B]{MalesteinPutman} constructed finite branched
covers $\tSigma \rightarrow \Sigma$ such that $\HHScc_1(\tSigma;\Q) \neq \HH_1(\tSigma;\Q)$. 
More recently, Klukowski \cite{Klukowski} constructed unbranched covers with this property.\footnote{Earlier
Koberda--Santharoubane \cite{KoberdaSantharoubane} constructed unbranched covers of closed surfaces
with $\HHScc_1(\tSigma;\Z) \neq \HH_1(\tSigma;\Z)$.  This is weaker since it is possible that in their examples
$\HHScc_1(\tSigma;\Z)$ is a finite-index subgroup of $\HH_1(\tSigma;\Z)$.}
Our first theorem is that Conjecture \ref{conjecture:putmanwieland} does hold if
if $\HHScc_1(\tSigma;\Q) = \HH_1(\tSigma;\Q)$. 

\begin{maintheorem}
\label{maintheorem:putmanwieland}
Let $\pi\colon \tSigma \rightarrow \Sigma$ be a finite branched cover between closed
oriented surfaces.  Consider some nonzero $\vec{v} \in \HHScc_1(\tSigma;\Q)$.  Then
the $\Mod(\Sigma,\tSigma)$-orbit of $\vec{v}$ is infinite.  In particular, if
$\HHScc_1(\tSigma;\Q) = \HH_1(\tSigma;\Q)$ then Conjecture \ref{conjecture:putmanwieland} holds
for $\pi\colon \tSigma \rightarrow \Sigma$.
\end{maintheorem}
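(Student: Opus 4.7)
My plan is to argue by contradiction. Suppose $\vec{v} \in \HHScc_1(\tSigma;\Q)$ is nonzero and has finite $\Mod(\Sigma,\tSigma)$-orbit. Let $H \subset \Mod(\Sigma,\tSigma)$ be a finite-index normal subgroup fixing $\vec{v}$, obtained by intersecting the conjugates of the stabilizer. I will aim to show that $\vec{v}$ is orthogonal, under the algebraic intersection pairing on $\tSigma$, to every class $[\tgamma]$ arising from a lift $\tgamma$ of a simple closed curve $\gamma \subset \Sigma$, i.e.\ that $\vec{v} \in \HHScc_1(\tSigma;\Q)^{\perp}$. Combining this with the companion result of the paper --- that $\HHScc_1(\tSigma;\Q)$ is a symplectic subspace of $\HH_1(\tSigma;\Q)$ --- forces $\vec{v} \in \HHScc_1 \cap (\HHScc_1)^{\perp} = 0$, contradicting the assumption.

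For the orthogonality, fix a simple closed curve $\gamma \subset \Sigma$ and write $\pi^{-1}(\gamma) = \tgamma_1 \cup \cdots \cup \tgamma_k$, with $d_i$ the degree of the restricted cover $\pi|_{\tgamma_i}$. Since both $[\Mod(\Sigma) : \Mod(\Sigma,\tSigma)]$ and $[\Mod(\Sigma,\tSigma) : H]$ are finite, I may choose a positive integer $N$ that is divisible by every $d_i$ and has $T_\gamma^{N} \in H$. The lift of $T_\gamma^{N}$ to $\tSigma$ is the product of commuting Dehn twists $\prod_{i} T_{\tgamma_i}^{N/d_i}$ along the pairwise disjoint curves $\tgamma_i$. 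The classical transvection formula together with $T_\gamma^{N} \vec{v} = \vec{v}$ (since $T_\gamma^{N} \in H$) therefore yields
\[
\sum_{i=1}^{k} \frac{N}{d_i}\, \langle \vec{v}, [\tgamma_i] \rangle\, [\tgamma_i] \;=\; 0 \qquad \text{in } \HH_1(\tSigma;\Q).
\]

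The key step is to pair this relation with $\vec{v}$ itself. Antisymmetry of the intersection form gives $\langle [\tgamma_i], \vec{v}\rangle = -\langle \vec{v}, [\tgamma_i]\rangle$, hence
\[
0 \;=\; \sum_{i=1}^{k} \frac{N}{d_i}\, \langle \vec{v}, [\tgamma_i]\rangle \,\langle [\tgamma_i], \vec{v}\rangle \;=\; -\sum_{i=1}^{k} \frac{N}{d_i}\, \langle \vec{v}, [\tgamma_i]\rangle^{2}.
\]
Every coefficient $N/d_i$ is a positive integer and every square is nonnegative, so each pairing $\langle \vec{v}, [\tgamma_i]\rangle$ must vanish. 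Since $\gamma$ and $i$ were arbitrary, $\vec{v} \in \HHScc_1(\tSigma;\Q)^{\perp}$ as desired. The ``in particular'' clause of the theorem is then immediate.

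I expect the genuine difficulty of the project to lie not in this argument but in the companion symplecticity result, which is the nontrivial input. What makes the Dehn-twist computation above go through cleanly is that the lift of a \emph{positive} power of $T_\gamma$ is always a positive combination of transvections along parallel disjoint lifts of $\gamma$ --- no signs or cancellations are possible --- so pairing with $\vec{v}$ converts the single homological relation into a sum of squares that can only vanish termwise.
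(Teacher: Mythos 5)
Your argument is correct, and its computational heart --- lifting a suitable power of $T_\gamma$ to the commuting product $\prod_i T_{\tgamma_i}^{N/d_i}$ and pairing the resulting transvection relation with $\vec{v}$ to obtain a sum of squares with positive coefficients --- is precisely the content of the paper's Lemma \ref{lemma:twistfixed}, which identifies the common fixed set of these lifted twists with $\HHScc_1(\tSigma;\Q)^{\perp}$. The packaging differs in two respects. First, the direction of the argument: the paper locates a single curve $\gamma$ with $\ttau_{\gamma}(\vec{v}) \neq \vec{v}$ and reads off infiniteness of the orbit from the unipotent formula $\ttau_{\gamma}^n(\vec{v}) = \vec{v} + n\vec{w}$, whereas you run the contrapositive through a finite-index stabilizer; these are equivalent, and your choice of $N$ divisible by the $d_i$ with $T_\gamma^N$ in the stabilizer is exactly the adjustment needed. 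Second, and more substantively, you invoke the symplecticity of $\HHScc_1(\tSigma;\Q)$ (Theorem \ref{maintheorem:sccsymplectic}) to conclude that $\HHScc_1(\tSigma;\Q) \cap \HHScc_1(\tSigma;\Q)^{\perp} = 0$. This is legitimate --- the proof of Theorem \ref{maintheorem:sccsymplectic2} in \S\ref{section:sccsymplectic} makes no use of Theorem \ref{maintheorem:putmanwieland}, so there is no circularity --- and it is in fact the input required to handle an arbitrary nonzero $\vec{v} \in \HHScc_1(\tSigma;\Q)$: the paper's written proof passes from ``$\vec{v}$ is fixed by $D_{\sigma}$'' to a contradiction by citing the hypothesis $\HHScc_1(\tSigma;\Q) = \HH_1(\tSigma;\Q)$ of the ``in particular'' clause, so for the general first assertion of the theorem your explicit appeal to the symplecticity result is the careful way to close the argument. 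You are also right that this is where the real difficulty of the project sits.
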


This suggests that the examples
from \cite{MalesteinPutman} and \cite{Klukowski} might be good places to look
for counterexamples
to Conjecture \ref{conjecture:putmanwieland}.

\subsection{Relationship to previous work}

Conjecture \ref{conjecture:putmanwieland} has been proved in a variety of cases; see,\footnote{Not all of
these papers explicitly prove cases of Conjecture \ref{conjecture:putmanwieland}, but it can be deduced from their
results in the cases they cover.} e.g., 
\cite{GrunewaldEtAl, LandesmanLitt1, LandesmanLitt2, LandesmanLitt3, LooijengaPrym}.  However, we know
very little about when $\HHScc_1(\tSigma;\Q) = \HH_1(\tSigma;\Q)$.  The only general result
we are aware of is that this holds when $\tSigma \rightarrow \Sigma$ is a finite unbranched abelian cover.  This
is implicit in work of Looijenga \cite{LooijengaPrym}, and we provide a self-contained
proof in Proposition \ref{proposition:sccabelian} below.\footnote{Proposition
\ref{proposition:sccabelian} is stronger than this: it shows that
$\HH_1(\tSigma;\Q)$ is spanned by lifts of {\em nonseparating} simple closed curves.}  Beyond this, it is unclear which known
cases of Conjecture \ref{conjecture:putmanwieland} follow from 
Theorem \ref{maintheorem:putmanwieland}.

\begin{remark}
It would be interesting to extend this to prove that $\HHScc_1(\tSigma;\Q) = \HH_1(\tSigma;\Q)$
for finite branched abelian covers.
\end{remark}

Next, let $\pi\colon \tSigma \rightarrow \Sigma$ be one of the examples from \cite{MalesteinPutman} 
or \cite{Klukowski} where $\HHScc_1(\tSigma;\Q) \neq \HH_1(\tSigma;\Q)$.  Below in Theorem \ref{maintheorem:sccsymplectic} we will prove
that $\HHScc_1(\tSigma;\Q)$ is a symplectic subspace of $\HH_1(\tSigma;\Q)$,  so
\[\HH_1(\tSigma;\Q) = \HHScc_1(\tSigma;\Q) \oplus \HHScc_1(\tSigma;\Q)^{\perp}.\]  
Theorem \ref{maintheorem:putmanwieland} says that the $\Mod(\Sigma,\tSigma)$-orbit
of all nonzero $\vec{v} \in \HHScc_1(\tSigma;\Q)$ is infinite.  It turns out
that there are also nonzero $\vec{v} \in \HHScc_1(\tSigma;\Q)^{\perp}$ whose
$\Mod(\Sigma,\tSigma)$-orbits are infinite.  Indeed, letting $D$ be the deck
group of $\pi\colon \tSigma \rightarrow \Sigma$, it follows from the constructions
in \cite{MalesteinPutman} and \cite{Klukowski} that some $D$-isotypic subspace $V$
of $\HH_1(\tSigma;\Q)$ lies in $\HHScc_1(\tSigma;\Q)^{\perp}$, and
Landesman--Litt \cite{LandesmanLitt2} proved that some nonzero $\vec{v} \in V$ has
an infinite $\Mod(\Sigma,\tSigma)$-orbit.

\subsection{Symplectic subspace}
\label{section:symplecticsubspace}

We next clarify 
the nature of the subspace $\HHScc_1(\tSigma;\Q)$ of $\HH_1(\tSigma;\Q)$.  By Poincar\'{e} duality, the algebraic
intersection form $\omega$ on $\HH_1(\tSigma;\Q)$ is a {\em symplectic form}, i.e., an alternating
bilinear form that induces an isomorphism between $\HH_1(\tSigma;\Q)$ and its dual.  A subspace
$V$ of $\HH_1(\tSigma;\Q)$ is a {\em symplectic subspace} if the restriction of $\omega$ to $V$
is a symplectic form.  We will prove the following:

\begin{maintheorem}
\label{maintheorem:sccsymplectic}
Let $\pi\colon \tSigma \rightarrow \Sigma$ be a finite branched cover between
closed oriented surfaces.
Then $\HHScc_1(\tSigma;\Q)$ is a symplectic subspace of $\HH_1(\tSigma;\Q)$.
\end{maintheorem}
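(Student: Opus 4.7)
The plan is to show $W \cap W^{\perp} = 0$, where $W := \HHScc_1(\tSigma;\Q)$ and $W^{\perp}$ is the orthogonal complement under the intersection form $\omega$ on $\HH_1(\tSigma;\Q)$; by Poincar\'{e} duality this is equivalent to the desired non-degeneracy of $\omega|_W$.

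The first step uses the projection formula. For each simple closed curve $\gamma$ on $\Sigma$ disjoint from the branch points, the transfer
\[
\tau([\gamma]) \;:=\; \sum_{\tgamma \subset \pi^{-1}(\gamma)} [\tgamma]
\]
lies in $W$, and satisfies $\omega_{\tSigma}(\vec{v},\tau([\gamma])) = \omega_{\Sigma}(\pi_{*}\vec{v},[\gamma])$ for all $\vec{v}\in\HH_1(\tSigma;\Q)$. Since simple closed curves span $\HH_1(\Sigma;\Q)$ and $\omega_{\Sigma}$ is non-degenerate, this forces $W^{\perp}\subseteq \ker \pi_{*}$. After passing if necessary to the Galois closure so that the cover is regular with deck group $D$, the decomposition $\HH_1(\tSigma;\Q) = \tau(\HH_1(\Sigma;\Q))\oplus \ker\pi_{*}$ is $\omega$-orthogonal: the left summand is $\HH_1(\tSigma;\Q)^D$, the right summand is the sum of non-trivial $D$-isotypics, and $\omega$ is $D$-invariant (so it pairs $V_\rho$ only with $V_{\rho^{*}}$). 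The averaging map $\tfrac{1}{|D|}\tau\pi_{*}$ carries $W$ to itself, so $W$ splits accordingly as $W = W_1 \oplus W_2$ with $W_1 := \tau(\HH_1(\Sigma;\Q))$ and $W_2 := W\cap \ker\pi_{*}$. The restriction $\omega|_{W_1}$ equals $\deg(\pi)\cdot \omega_{\Sigma}$ and is therefore non-degenerate, reducing the theorem to showing that $\omega|_{W_2}$ is non-degenerate.

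This last reduction is the main obstacle, as it concerns the ``new'' (non-invariant) part of $\HH_1(\tSigma;\Q)$. Decomposing $W_2$ into non-trivial $D$-isotypic components $\bigoplus_{\rho \neq 1} W_{\rho}$ and using that $\omega$ pairs $V_{\rho}$ only with $V_{\rho^{*}}$, one needs the induced pairing $W_{\rho}\times W_{\rho^{*}}\to \Q$ to be non-degenerate for each non-trivial irreducible $\Q$-representation $\rho$ of $D$. My plan is to argue combinatorially using the classical intersection formula for lifts: given SCCs $\gamma,\beta$ on $\Sigma$ meeting once at a point $p$, and lifts $\tgamma,\tbeta$ sharing a preimage of $p$, the algebraic intersection $\omega([\tgamma],[\tbeta])$ equals $|H_{\gamma}\cap H_{\beta}|$ where $H_{\gamma},H_{\beta}\le D$ are the respective stabilizers, and is zero otherwise. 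Given a non-zero $\vec{v}\in W_{\rho}$, one aims to choose an auxiliary SCC $\beta$ whose lifts interact appropriately with the lifts underlying $\vec{v}$, and then use that for $\vec{v}\in V_\rho$ and any class $\vec{u}$ one has $\omega(\vec{v},\vec{u}) = \omega(\vec{v},e_{\rho^{*}}\vec{u})$ (by $D$-invariance), to upgrade a single lift $[\tbeta]$ with $\omega(\vec{v},[\tbeta])\neq 0$ to a class $e_{\rho^{*}}[\tbeta]\in W_{\rho^{*}}$ that pairs non-trivially with $\vec{v}$. The delicate point is ensuring that such a $[\tbeta]$ exists at all among lifts of SCCs rather than general classes in $\HH_1(\tSigma;\Q)$. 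Alternatively, since the radical $W\cap W^{\perp}$ is stable under both $D$ and $\Mod(\Sigma,\tSigma)$, one may hope to invoke rigidity of the Prym-like $\Mod$-representation on the non-trivial isotypics of $\HH_1(\tSigma;\Q)$ to rule out a non-trivial radical directly.
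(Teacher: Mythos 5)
Your first reduction is sound as far as it goes: the transfer argument showing $W^{\perp}\subseteq\ker\pi_{*}$, the $\omega$-orthogonal splitting of $\HH_1(\tSigma;\Q)$ into the $D$-invariant part and $\ker\pi_{*}$, and the computation $\omega|_{W_1}=\deg(\pi)\cdot\omega_{\Sigma}$ are all correct (modulo the unaddressed step of descending from the Galois closure back to $\tSigma$ when the cover is irregular). But the argument stops exactly where the theorem becomes nontrivial. Proving that $\omega$ restricted to $W_2=W\cap\ker\pi_{*}$ is non-degenerate is the entire content of the statement, and neither of your two suggested strategies closes it. The combinatorial strategy requires, for a nonzero $\vec v\in W_{\rho}$, a lift $\tbeta$ of a simple closed curve with $\omega(\vec v,[\tbeta])\neq 0$; but $\vec v$ is only known to be a rational combination of such lifts, and there is no a priori reason such a $\tbeta$ exists --- this is exactly the ``delicate point'' you flag, and the examples of Malestein--Putman and Klukowski (where $W\neq\HH_1(\tSigma;\Q)$) show that the span of scc-lifts inside an isotypic component can be genuinely opaque. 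The ``rigidity'' alternative is likewise not a proof: knowing that the radical $W\cap W^{\perp}$ is $D$- and $\Mod(\Sigma,\tSigma)$-stable contradicts nothing unless you already know the relevant representation admits no invariant isotropic subspaces, which is again the point at issue.

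The paper supplies precisely the missing ingredient, by a route disjoint from yours. First, Lemma \ref{lemma:twistfixed} identifies $W^{\perp}$ with the fixed space $H^{D_{\scc}}$ of the group $D_{\scc}\subset\Sp(H,\omega)$ generated by the lifted powers of Dehn twists about simple closed curves, using the explicit transvection formula for such lifts. Second, Lemma \ref{lemma:dsemisimple} shows $D_{\scc}$ acts semisimply on $H$: its Zariski closure is connected (being generated by one-parameter unipotent subgroups) and normal in the Zariski closure of the image of the whole lifted mapping class group, and the latter is a semisimple algebraic group by Deligne's semisimplicity theorem applied to the monodromy of a universal family of covers. Finally, Lemma \ref{lemma:symplecticcriterion} (essentially Deligne, \emph{Weil II}, Lemme 4.14) shows that the fixed space of any semisimply-acting subgroup of $\Sp(H,\omega)$ is a symplectic subspace. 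To complete your approach you would need some substitute for this semisimplicity input; the transfer and isotypic bookkeeping alone cannot produce it.
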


In fact, we will prove something more general.  A {\em nontrivial simple closed curve} on $\Sigma$
is a simple closed curve $\gamma$ that avoids the marked points and does not bound a disk containing
at most one marked point.  We will always consider such curves up to isotopy.\footnote{These
are isotopies through nontrivial simple closed curves, so during the isotopies the curves
cannot pass through the marked points.}  The group $\Mod(\Sigma)$
acts on the set of nontrivial simple closed curves on $\Sigma$, and the orbits of
this action are the {\em topological types} of nontrivial simple closed curves.\footnote{By 
the change of coordinates principle from \cite[\S 1.3.2]{Primer}, the topological types are
determined by the marked surface with boundary one gets by cutting $\Sigma$ open along $\gamma$.
For instance, one topological type is the set of all nonseparating $\gamma$.}

If $\sigma$ is a set of topological types of nontrivial simple closed curves on $\Sigma$, then
denote by $\HH_1^{\sigma}(\tSigma;\Q)$ the subspace
of $\HH_1(\tSigma;\Q)$ spanned by the homology classes of loops $\tgamma$ on $\tSigma$ that avoid the branch
points and project to simple closed curves $\gamma$ on $\Sigma$ such that the topological type of
$\gamma$ lies in $\sigma$.  For instance, if $\sigma$ is the set of all topological types of 
nontrivial simple closed curves on $\Sigma$, then
\[\HH_1^{\sigma}(\Sigma;\Q) = \HHScc_1(\tSigma;\Q).\]
The following therefore generalizes Theorem \ref{maintheorem:sccsymplectic}:

\stepcounter{maintheoremprime}
\begin{maintheoremprime}
\label{maintheorem:sccsymplectic2}
Let $\pi\colon \tSigma \rightarrow \Sigma$ be a finite branched cover between closed oriented
surfaces and $\sigma$ be a set of topological types of nontrivial simple closed curves on $\Sigma$.  Then
$\HH_1^{\sigma}(\tSigma;\Q)$ is a symplectic subspace of $\HH_1(\tSigma;\Q)$.
\end{maintheoremprime}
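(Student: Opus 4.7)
My goal would be to show that the radical $K := V \cap V^{\perp}$ of $V := \HH_1^{\sigma}(\tSigma;\Q)$ under the symplectic form $\omega$ is trivial, which is equivalent to $V$ being a symplectic subspace.

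The first step is to exploit the natural symmetries of $V$. The deck group $D$ of $\pi$ permutes the lifts of any fixed s.c.c.\ on $\Sigma$, so $V$ is $D$-invariant; similarly, since topological types of s.c.c.\ on $\Sigma$ are preserved by $\Mod(\Sigma)$, the lifted mapping class group $\Mod(\Sigma, \tSigma)$ preserves $V$. Because $\omega$ is invariant under both actions, $K$ inherits both invariances. The core geometric input would then be the projection formula: for $w \in \HH_1(\tSigma;\Q)$ and any s.c.c.\ $\delta$ on $\Sigma$ with lifts $\tdelta_1, \ldots, \tdelta_m$,
$$\sum_{j=1}^m \omega_{\tSigma}(w, [\tdelta_j]) = \omega_\Sigma(\pi_* w, [\delta]).$$
Applied to $w \in V^{\perp}$ and $\delta$ of type in $\sigma$, the left side vanishes, so $\omega_\Sigma(\pi_* w, [\delta]) = 0$. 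If $\sigma$ contains some non-separating type the corresponding $[\delta]$ span $\HH_1(\Sigma;\Q)$, forcing $\pi_* w = 0$; otherwise all such $[\delta]$ are null-homologous and the constraint is vacuous.

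To push further I would pass to $D$-isotypic components, writing $V \otimes \overline{\Q} = \bigoplus_\rho V_\rho$ and $\HH_1(\tSigma; \overline{\Q}) = \bigoplus_\rho W_\rho$. Since $\omega$ is $D$-invariant, $\omega(W_\rho, W_{\rho'}) = 0$ unless $\rho' = \rho^*$, so $V$ is symplectic if and only if each pairing $V_\rho \otimes V_{\rho^*} \to \overline{\Q}$ is non-degenerate. For the trivial representation this is easy: transfer identifies $\HH_1(\tSigma;\Q)^D$ with $\HH_1(\Sigma;\Q)$ and carries $V^D$ onto $\HH_1^\sigma(\Sigma;\Q) \subset \HH_1(\Sigma;\Q)$, which equals either $0$ (when $\sigma$ consists only of separating types) or all of $\HH_1(\Sigma;\Q)$ (when $\sigma$ contains a non-separating type), and in both cases is trivially symplectic.

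The main obstacle is establishing non-degeneracy of the pairing $V_\rho \otimes V_{\rho^*} \to \overline{\Q}$ for nontrivial $\rho$. I would try to combine the $\Mod(\Sigma, \tSigma)$-invariance of each $V_\rho$ with the explicit Picard--Lefschetz action of lifted Dehn twists around lifts of s.c.c.\ of type in $\sigma$, namely $u \mapsto u + \omega(u, [\tgamma])[\tgamma]$, noting that suitable powers of such twists always lie in $\Mod(\Sigma, \tSigma)$. The hope is that the joint $D \times \Mod(\Sigma, \tSigma)$-action on each $V_\rho$ is rich enough that no nonzero vector can remain in the radical. This is the technically delicate step, and I expect it to require either a direct geometric construction producing, for each candidate radical vector $v$, an s.c.c.\ $\delta$ of type in $\sigma$ and a specific lift $\tdelta$ with $\omega(v, [\tdelta]) \neq 0$, or else a deeper input about the image of $\Mod(\Sigma, \tSigma)$ in the symplectic group on each isotypic component.
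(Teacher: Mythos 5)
Your setup is sensible and partially overlaps the paper's, but the step you defer at the end --- non-degeneracy of the pairing on the nontrivial isotypic pieces, equivalently triviality of the radical of $V$ --- is the entire content of the theorem, and neither of the two routes you sketch for it is carried out. The first route (for each nonzero radical vector $v$, directly exhibit a lift $\tdelta$ of a curve of type in $\sigma$ with $\omega(v,[\tdelta])\neq 0$) is just a restatement of what must be proved, and no such direct construction is known. What the paper actually does is this. Let $D_{\sigma}\subset\Sp(H,\omega)$ be the group generated by the transformations $\ttau_{\gamma}\colon h\mapsto h+\sum_{j}e_j\,\omega(h,[\tgamma_j])\,[\tgamma_j]$ induced by lifting suitable powers of Dehn twists about curves $\gamma$ of type in $\sigma$ (note this is \emph{not} your deck group). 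First, $(H^{\sigma})^{\perp}=H^{D_{\sigma}}$: one inclusion is immediate from the displayed formula, and for the other, if $\ttau_{\gamma}(h_0)=h_0$ then pairing $\sum_j e_j\,\omega(h_0,[\tgamma_j])[\tgamma_j]=0$ against $h_0$ yields $\sum_j e_j\,\omega(h_0,[\tgamma_j])^2=0$, forcing every term to vanish. Second, a general lemma: if a subgroup of $\Sp(H,\omega)$ acts semisimply on $H$, then its invariant subspace is symplectic (duality plus the isomorphism between invariants and coinvariants). Third --- and this is precisely the ``deeper input'' you correctly suspected was unavoidable --- $D_{\sigma}$ acts semisimply: its Zariski closure is connected (it is generated by the one-parameter groups through the unipotents $\ttau_{\gamma}$) and is normal in the Zariski closure of the image of a finite-index subgroup of $\Mod(\Sigma)$, which is a semisimple algebraic group by Deligne's semisimplicity theorem applied to the monodromy of a universal family of these covers. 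Without some such input your argument does not close.

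Two smaller problems with the intermediate steps. Your isotypic decomposition under the deck group tacitly assumes the branched cover is regular, which the theorem does not; for a non-normal cover the deck group may be trivial and the decomposition says nothing (one would have to pass to a Galois closure and then descend, which you do not address). And even granting regularity, the projection formula and the transfer argument only control the trivial isotypic component, so they do not touch the main difficulty; the correct and useful reformulation is the orthogonal-complement identity $(H^{\sigma})^{\perp}=H^{D_{\sigma}}$ above, which makes no reference to the deck group at all.
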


We can also define $\HH_1^{\sigma}(\tSigma;\Z)$ and $\HH_1^{\scc}(\tSigma;\Z)$, and it is natural
to wonder whether Theorems \ref{maintheorem:sccsymplectic} and \ref{maintheorem:sccsymplectic2} hold
integrally.  For Theorem \ref{maintheorem:sccsymplectic2}, the answer is no in general:

\begin{maintheorem}
\label{maintheorem:nonsymplectic}
Let $\Sigma$ be a closed oriented surface of genus at least $2$ and let $\sigma$ be the set
of nonseparating simple closed curves on $\Sigma$.  Then
there exists a finite unbranched cover $\pi\colon \tSigma \rightarrow \Sigma$ such that
$\HH_1^{\sigma}(\tSigma;\Z)$ is not a symplectic subspace of $\HH_1(\tSigma;\Z)$.
\end{maintheorem}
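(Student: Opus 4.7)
The plan is to exhibit a finite unbranched cover $\pi\colon \tSigma \to \Sigma$ for which the integral lattice $L := \HH_1^{\sigma}(\tSigma;\Z)$ is a proper finite-index sublattice of $M := \HH_1(\tSigma;\Z)$. Once this is achieved the theorem follows from general linear algebra: $M$ is unimodular under the intersection pairing $\omega$ by Poincar\'{e} duality, so any proper finite-index sublattice $L \subset M$ has Gram determinant $[M:L]^2 > 1$. In particular $\omega|_L$ is not a symplectic form on $L$, so $L$ is not a symplectic subspace of $M$.

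Since Proposition~\ref{proposition:sccabelian} guarantees that for a finite abelian unbranched cover the nonseparating simple closed curve lifts span $\HH_1(\tSigma;\Q)$, any abelian cover automatically satisfies $L \otimes \Q = M \otimes \Q$; so in that setting it is enough to show $L \neq M$. A direct calculation, however, shows that the simplest abelian covers (e.g.\ unbranched double covers of $\Sigma_2$) produce $L = M$, because each handle of $\Sigma$ contributes enough lifts to exhaust $\HH_1(\tSigma;\Z)$. One must therefore take a cover of higher complexity---for instance, a $(\Z/2)^k$-unbranched cover of $\Sigma_g$ for suitable $g, k$, a $\Z/p^n$-cover for $n \ge 2$, or a non-abelian cover in the spirit of Koberda--Santharoubane \cite{KoberdaSantharoubane}.

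The main work is to set up an obstruction. Choose the cover so that there is a primitive class $w \in M$ for which the homomorphism
\[\phi\colon M \to \Z/2, \qquad \phi(v) := \omega(v, w) \bmod 2\]
vanishes on $[\tgamma]$ for every lift $\tgamma$ of every nonseparating simple closed curve $\gamma$ on $\Sigma$, while $\phi$ is not identically zero. Cover lifts have natural compatibilities with the deck group action: if $G$ is the deck group and $\bar\gamma \in G$ has order $m$, then $\gamma$ has $|G|/m$ distinct lifts, each a degree-$m$ cover of $\gamma$, permuted by $G/\langle\bar\gamma\rangle$. The strategy is to choose $w$ in a specific $G$-isotypic component so that $\omega(\tgamma, w)$ is forced to be even by these symmetries. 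Given such $\phi$, we have $L \subset \ker \phi \subsetneq M$ with index $\ge 2$, finishing the proof.

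The principal obstacle is verifying that $\phi$ kills \emph{every} lift of \emph{every} nonseparating simple closed curve, an infinite family up to isotopy. The reduction is to use the change-of-coordinates principle together with the fact that $\Mod(\Sigma,\tSigma)$ acts on $\HH_1(\tSigma;\Z)$ preserving $L$ and that, by Torelli, nonseparating simple closed curves of the same homology class on $\Sigma$ are carried to one another by $\Mod(\Sigma)$ and hence their lifts lie in a single $\Mod(\Sigma,\tSigma)$-orbit inside $\HH_1(\tSigma;\Z)$. Choosing $w$ so that $\phi$ is $\Mod(\Sigma,\tSigma)$-invariant, one is reduced to checking the vanishing of $\phi$ on one lift of one representative for each $\Mod(\Sigma)$-orbit of primitive homology classes, which is finite and tractable for a well-chosen cover.
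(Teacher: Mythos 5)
Your opening reduction is sound and matches the paper's: since $\HH_1(\tSigma;\Z)$ is unimodular under the intersection form, a full-rank sublattice of index greater than one cannot be symplectic (this is the content of Lemma \ref{lemma:reduction}), and Proposition \ref{proposition:sccabelian} supplies full rank for any finite abelian unbranched cover. So the entire theorem comes down to exhibiting a specific abelian cover with $\HH_1^{\sigma}(\tSigma;\Z) \neq \HH_1(\tSigma;\Z)$ --- and this is exactly where your argument stops being a proof. You never specify the cover (you list three candidate families and a "direct calculation" that is not shown), you never produce the class $w$, and you explicitly flag the verification that $\omega(\cdot,w)\bmod 2$ kills every lift of every nonseparating simple closed curve as ``the principal obstacle'' without resolving it. The closing reduction to finitely many orbits does not discharge anything either: even granting that $\phi$ can be chosen $\Mod(\Sigma,\tSigma)$-invariant, the finite check is never performed; moreover, for the covers that actually work the quotient $\HH_1(\tSigma;\Z)/\HH_1^{\sigma}(\tSigma;\Z)$ need not have even order, so an obstruction of the shape $\omega(\cdot,w)\bmod 2$ is not even guaranteed to exist.

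For comparison, the paper takes $\tSigma=\Sigma[\ell]$, the cover determined by $\pi_1(\Sigma)\to\HH_1(\Sigma;\Z/\ell)$ with $\ell\geq 3$, and the obstruction is group-theoretic rather than a single intersection functional. Because a nonseparating curve is primitive in $\HH_1(\Sigma;\Z)$, its smallest power lying in $\pi_1(\Sigma[\ell])$ is the $\ell$-th power, so $\HH_1^{\sigma}(\Sigma[\ell];\Z)$ is contained in the image $\oP$ of the subgroup $P$ of $\pi_1(\Sigma[\ell])$ generated by $\ell$-th powers. One then maps $\pi_1(\Sigma)$ onto a two-step nilpotent group $N_g[\ell]$ in which the $\ell$-th powers generate a free abelian central subgroup complementary to a nontrivial torsion piece $\wedge^2(\Z/\hell)^g$ hit by the commutator subgroup; this exhibits $\oP$, and hence $\HH_1^{\sigma}(\Sigma[\ell];\Z)$, as a proper subgroup. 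Your proposal identifies the right target but does not contain a substitute for this computation, so as written it has a genuine gap.
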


Here $\HH_1^{\sigma}(\tSigma;\Z)$ is a free abelian group, and a symplectic form on a free abelian group $A$
is an alternating $\Z$-valued bilinear form on $A$ that identifies $A$ with its dual $A^{\ast} = \Hom(A,\Z)$.
Unfortunately, our proof of Theorem \ref{maintheorem:nonsymplectic} breaks down if we allow separating
curves, so we cannot answer the following question:

\begin{question}
\label{question:symplectic}
Let $\pi\colon \tSigma \rightarrow \Sigma$ be a finite branched cover between closed oriented
surfaces.  Is $\HH_1^{\scc}(\tSigma;\Z)$ a symplectic subspace of $\HH_1(\tSigma;\Z)$?
\end{question}

However, Theorem \ref{maintheorem:nonsymplectic} suggests that the answer to this should be ``no''.

\begin{remark}
An important ingredient in our proof of Theorem \ref{maintheorem:nonsymplectic} is a theorem of Irmer \cite{IrmerCovers}
giving certain finite abelian covers $\pi\colon \tSigma \rightarrow \Sigma$ for which
$\HH_1^{\sigma}(\tSigma;\Z)$ is a proper subgroup of $\HH_1(\tSigma;\Z)$ (see Theorem \ref{theorem:nonsymplectic}.(ii) below).  
To make this paper more self-contained, we also include a simplified proof of this theorem.
\end{remark}

\subsection{Pants homology}

Regular neighborhoods of simple closed curves on $\Sigma$ are homeomorphic to annuli, i.e., spheres
with two boundary components.  This suggests weakening the definition of simple closed curve homology
as follows.  

Recall that a {\em pair of pants} is a sphere with three holes.  Define the {\em pants
homology} of $\tSigma$, denoted $\HHPants_1(\tSigma;\Q)$, to be the subspace of
$\HH_1(\tSigma;\Q)$ spanned by the homology classes of loops $\tgamma$ on $\tSigma$ such that there
exists a subsurface $P \subset \Sigma$ homeomorphic to a pair of pants with $\pi(\tgamma) \subset P$.
Since every simple closed curve on $\Sigma$ is contained in some\footnote{We do not require the boundary
components of $P$ to be non-nullhomotopic curves on $\Sigma$, so this even holds if $\Sigma$ is a surface like a sphere that
does not contain pairs of pants $P$ whose boundary components are non-nullhomotopic.} such $P$, we have 
\[\HHScc_1(\tSigma;\Q) \subset \HHPants_1(\tSigma;\Q).\]
Our final main theorem is as follows.  It answers positively a question\footnote{Kent actually asked whether
$\HH_1(\tSigma;\Q)$ is generated by lifts of elements that do not fill $\Sigma$, which is much weaker
than lying in a pair of pants.} of Kent \cite{KentMO}.

\begin{maintheorem}
\label{maintheorem:pants}
Let $\pi\colon \tSigma \rightarrow \Sigma$ be a finite branched cover between closed
oriented surfaces.  Then
$\HHPants_1(\tSigma;\Q) = \HH_1(\tSigma;\Q)$.
\end{maintheorem}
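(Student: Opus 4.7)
The plan is to combine a Mayer--Vietoris argument applied to a pants decomposition of $\Sigma$ with a direct construction of pants-loop classes to handle the residual contributions. First I would fix a pants decomposition of $\Sigma$ by simple closed curves $\{c_\beta\}$ cutting $\Sigma$ into pants $\{P_j\}$, arranged so that every branch point of $\pi$ lies in the interior of some $P_j$. Lifting, the preimages $\pi^{-1}(\bigcup c_\beta)$ form disjoint circles $\{\tc_\gamma\}$ which cut $\tSigma$ into \emph{pants covers} $\{\tP_\alpha\}$, where each $\tP_\alpha$ is a connected finite cover of some $P_{j(\alpha)}$. Any loop in $\tP_\alpha$ projects into the pair of pants $P_{j(\alpha)}$ and so represents a class in $\HHPants_1(\tSigma;\Q)$.

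Next, the Mayer--Vietoris spectral sequence for the cover $\tSigma = \bigcup_\alpha \tP_\alpha$ (whose only nonempty pairwise intersections are the circles $\tc_\gamma$ and whose triple intersections are empty) yields a short exact sequence
\[
0 \to \coker\Bigl(\bigoplus_\gamma \HH_1(\tc_\gamma;\Q) \to \bigoplus_\alpha \HH_1(\tP_\alpha;\Q)\Bigr) \to \HH_1(\tSigma;\Q) \to \HH_1(\Gamma;\Q) \to 0,
\]
where $\Gamma$ is the dual graph with vertices $\tP_\alpha$ and edges $\tc_\gamma$. The leftmost term is a quotient of $\bigoplus_\alpha \HH_1(\tP_\alpha;\Q)$ and therefore lands inside $\HHPants_1(\tSigma;\Q)$. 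The theorem then reduces to showing that every class in $\HH_1(\tSigma;\Q)$ mapping to a cycle of $\Gamma$ admits a representative lying in $\HHPants_1(\tSigma;\Q)$.

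The main obstacle is realizing an arbitrary dual-graph cycle by a pants-loop class. The key flexibility is that a pair of pants $P \subset \Sigma$ need not come from our decomposition---it can be any embedded $3$-holed sphere. For a dual cycle of length $2$, represented by a loop $\tgamma \subset \tSigma$ crossing a single circle $\tc_\gamma$ twice, the projection $\pi(\tgamma) \subset \Sigma$ can be enclosed in a pair of pants $P$ obtained as a regular neighborhood of a representative simple closed curve together with an auxiliary disk attached by a thin strip. For longer cycles, I would proceed by induction on cycle length, modifying the representative loop by lifts of loops in single pants covers---which leave the image in $\HH_1(\Gamma;\Q)$ unchanged---to progressively simplify the projection. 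The hard part will be making this inductive step uniform: ensuring that every dual-graph cycle decomposes, modulo the pants-loop classes already captured, into length-$2$ cycles. This likely requires either an adaptive choice of pants decomposition or the use of auxiliary pants in $\Sigma$ not appearing in the decomposition, exploiting the fact that many different embedded pairs of pants in $\Sigma$ can contain the projection of a given loop in $\tSigma$.
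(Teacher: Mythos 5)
Your Mayer--Vietoris setup is correct and correctly isolates where the difficulty lies: the cokernel term visibly lies in $\HHPants_1(\tSigma;\Q)$, and everything comes down to classes mapping nontrivially to $\HH_1(\Gamma;\Q)$ for the dual graph $\Gamma$. But that last step is exactly the hard content of the theorem, and your proposal does not prove it. The sketched induction on cycle length---"modifying the representative loop by lifts of loops in single pants covers" and hoping that every dual-graph cycle decomposes, modulo classes already captured, into length-$2$ cycles---is not carried out, and there is no reason to expect it to work as stated: the examples of Malestein--Putman and Klukowski show that lifts of simple closed curves (each of which sits in an annulus, hence in a pair of pants) can fail to span $\HH_1(\tSigma;\Q)$, so any argument must exploit genuinely non-simple loops inside auxiliary pants, and your proposal gives no mechanism for producing the needed classes. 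Even the length-$2$ base case is only asserted ("can be enclosed in a pair of pants $P$ obtained as a regular neighborhood of a representative simple closed curve together with an auxiliary disk attached by a thin strip") without an argument that the enclosing subsurface is an embedded $3$-holed sphere containing the full projection $\pi(\tgamma)$.

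For comparison, the paper resolves precisely this point by a different mechanism. Taking $\sigma$ to be the set of topological types of the curves in the pants decomposition, Theorem~\ref{maintheorem:sccsymplectic2} shows that $H^{\sigma}=\HH_1^{\sigma}(\tSigma;\Q)$ is a \emph{symplectic} subspace, so $\HH_1(\tSigma;\Q)=H^{\sigma}\oplus (H^{\sigma})^{\perp}$. The summand $H^{\sigma}$ lies in $\HHScc_1\subset\HHPants_1$, and any integral class in $(H^{\sigma})^{\perp}$ is represented by an oriented multicurve that, having zero algebraic intersection with every component of the preimage of every decomposition curve, can be surgered to be disjoint from all of them---hence lies in the image of $\bigoplus_\alpha \HH_1(\tP_\alpha;\Q)$ in your notation. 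In other words, the part of $\HH_1(\tSigma;\Q)$ surjecting onto $\HH_1(\Gamma;\Q)$ is absorbed by $H^{\sigma}$, i.e., by lifts of simple closed curves of the same topological types as the decomposition curves (not only the decomposition curves themselves). The symplectic-subspace input rests on Deligne's semisimplicity theorem via Lemma~\ref{lemma:dsemisimple}, which is a strong indication that the gap in your argument is not one that can be closed by elementary surgeries on loops alone.
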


\begin{remark}
This also holds for punctured surfaces of finite type, which
can be reduced to Theorem \ref{maintheorem:pants} as follows.  Let 
$\pi\colon \tSigma \rightarrow \Sigma$
be a finite branched cover between punctured surfaces of finite type.  Filling 
in the punctures yields a finite branched
cover between closed surfaces to which one can apply Theorem \ref{maintheorem:pants}.
To conclude, note that 
filling in the punctures has the effect of killing the homology
classes in $\HH_1(\tSigma;\Q)$ of loops around the punctures, which lie
in $\HHScc_1(\tSigma;\Q) \subset \HHPants_1(\tSigma;\Q)$.
\end{remark}

\begin{remark}
Theorem \ref{maintheorem:pants} might appear to contradict \cite[Theorem C]{MalesteinPutman} and
\cite[Corollary 1.1.3]{Klukowski}, which give examples of finite covers
$\pi\colon \tSigma \rightarrow \Sigma$
such that $\HH_1(\tSigma;\Q)$ is not spanned by the homology classes of loops $\tgamma$ such that
$\pi(\tgamma)$ is not in any given finite set of mapping class group\footnote{or even automorphism
group of free group} orbits of curves.  However, in the definition of $\HHPants_1(\tSigma;\Q)$ there is no
restriction on the number of self-intersections of the projections of the curves to the $P$,
so they do not fall into finitely many mapping class group orbits.
\end{remark}

The proof of Theorem \ref{maintheorem:pants} actually shows something stronger.  A {\em pants decomposition}
of $\Sigma$ is a collection $\cP = \{\delta_1,\ldots,\delta_n\}$ of disjoint simple closed curves on $\Sigma$ that avoid
the branch points such that each component of
$\Sigma \setminus \cup_{j=1}^n \delta_j$
is either a disk containing a single branch point or a pair of pants containing no branch points:\\
\centerline{\psfig{file=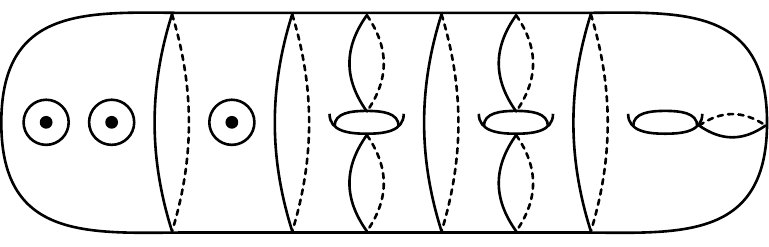,scale=1}}
We will prove the following, which implies Theorem \ref{maintheorem:pants}:

\stepcounter{maintheoremprime}
\begin{maintheoremprime}
\label{maintheorem:pants2}
Let $\pi\colon \tSigma \rightarrow \Sigma$ be a finite branched cover between closed
oriented surfaces
and $\cP$ be a pants decomposition of $\Sigma$.  Let $\sigma$ be the set of topological types of nontrivial curves
appearing in $\cP$.  Then $\HH_1(\tSigma;\Q)$ is spanned by $\HH_1^{\sigma}(\tSigma;\Q)$ and
the set of homology classes of cycles $\tgamma$ on $\tSigma$ such that $\pi(\tgamma)$ is disjoint 
from all curves in $\cP$.
\end{maintheoremprime}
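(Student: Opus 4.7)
The plan is to apply Mayer--Vietoris to the decomposition $\tSigma = A \cup B$, where $A$ is a closed regular neighborhood of $\tcP = \pi^{-1}(\cP)$ (a disjoint union of closed annuli) and $B = \overline{\tSigma \setminus A}$; then $A \cap B = \partial A$ is a disjoint union of circles, two on each side of each component of $\tcP$. Since $\HH_1(A;\Q)$ is spanned by annulus cores isotopic to curves in $\tcP$ (lifts of $\cP$-curves, hence of type in $\sigma$), we have $\Image(\HH_1(A) \to \HH_1(\tSigma)) \subseteq \HH_1^\sigma$; and $\HH_1(B;\Q) \to \HH_1(\tSigma;\Q)$ hits exactly the cycles projecting disjoint from $\cP$. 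The MV sequence identifies the quotient $\HH_1(\tSigma) / \Image(\HH_1(A)\oplus \HH_1(B))$ with $\ker(\phi)$, where $\phi \colon \HH_0(A \cap B) \to \HH_0(A) \oplus \HH_0(B)$. It therefore suffices to show that the restriction of the MV boundary $\partial_{MV}\colon \HH_1(\tSigma) \to \HH_0(A \cap B)$ to $\HH_1^\sigma$ surjects onto $\ker(\phi)$.

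A direct analysis of $\phi$ shows that $\ker(\phi)$ is generated by the classes $[c^+]-[c^-]$ for \emph{self-adjacent} $c \in \tcP$ (those whose two sides lie in the same component of $B$), together with flow-conservation combinations on the non-self-adjacent curves coming from cycles in the dual graph of $\tSigma$'s decomposition. The key topological point is that if $\delta \in \cP$ is self-adjacent in $\Sigma$, then $\delta$ is necessarily nonseparating: otherwise the two sides of $\delta$ would lie in different components of $\Sigma \setminus \delta$ and hence in different pants pieces. Consequently the nonseparating type appears in $\sigma$ whenever the self-adjacent case is nonempty, and an analogous argument handles the non-self-adjacent part by showing that any cycle in the dual graph has type in $\sigma$.

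The main construction: for a self-adjacent $c \in \tcP$ with $\delta = \pi(c)$, the pants piece $P \subset \Sigma \setminus \cP$ adjacent to $\delta$ on both sides, together with $\delta$, forms a once-punctured torus $T \subset \Sigma$ in which $\delta$ is an essential nonseparating curve and $\partial T$ is another $\cP$-curve. Any simple closed curve $\gamma \subset T$ crossing $\delta$ once is disjoint from $\cP \setminus \{\delta\}$, nonseparating in $\Sigma$, and hence of type in $\sigma$; its lifts to $\tSigma$ intersect $\tcP$ only along components of $\pi^{-1}(\delta)$. The plan is to vary $\gamma$ over all essential SCCs of $T$ (equivalently, precompose a fixed transverse meridian with powers of the Dehn twist around $\delta$) and take rational combinations of lifts to produce an $\HH_1^\sigma$-class whose MV boundary is exactly $[c^+] - [c^-]$.

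The crux, and main obstacle, is to verify that this family of lifts is rich enough to span the subspace of $\ker(\phi)$ supported on $\pi^{-1}(\delta)$. This reduces to a statement about a finite cover of the once-punctured torus: one must show that, as $\gamma$ ranges over essential SCCs of $T$ and one considers all lifts, the resulting intersection vectors with $\pi^{-1}(\delta)$ generate the full subspace cut out by the flow-conservation constraints. The key input is that the monodromy action of $[\delta]$ permutes the sheets over $\delta$, and combining this with the $\Mod(T)$-action on SCCs of $T$ (which acts transitively on nonzero primitive homology classes) produces enough independent intersection vectors to fill the required subspace. Combining with the analogous argument for the non-self-adjacent contribution to $\ker(\phi)$ yields the desired surjectivity.
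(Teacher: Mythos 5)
Your Mayer--Vietoris framework is sound as far as it goes: with $A$ a regular neighborhood of $\pi^{-1}(\cP)$ and $B$ its complement, the images of $\HH_1(A;\Q)$ and $\HH_1(B;\Q)$ do land in $H^{\sigma}:=\HH_1^{\sigma}(\tSigma;\Q)$ and in the span of cycles projecting off $\cP$, respectively, and the theorem would follow from surjectivity of $\partial_{MV}$ restricted to $H^{\sigma}$ onto $\ker(\phi)$. But that surjectivity is exactly where the content of the theorem lives, and your proposal does not prove it. Unwinding duality: $\partial_{MV}(h)$ is the vector of algebraic intersection numbers $\omega(h,[c])$ over the components $c$ of $\pi^{-1}(\cP)$, and since $\partial_{MV}(H)=\ker(\phi)$ for $H=\HH_1(\tSigma;\Q)$, the equality $\partial_{MV}(H^{\sigma})=\ker(\phi)$ is equivalent to the statement that no nonzero rational combination $\sum_c \mu_c[c]$ lies in $(H^{\sigma})^{\perp}$. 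As these combinations lie in $H^{\sigma}$, this is precisely an instance of the assertion that $H^{\sigma}$ is a symplectic subspace (Theorem \ref{maintheorem:sccsymplectic2}), whose proof in the paper requires Deligne semisimplicity applied to the Zariski closure of the group generated by lifted Dehn twists. Your sketched substitute --- twisting a transverse curve in the one-holed torus $T$ and appealing to the monodromy of $[\delta]$ permuting sheets to ``produce enough independent intersection vectors'' --- is not an argument: you exhibit no mechanism forcing the span of the intersection vectors of lifts of curves in $T$ to contain the indicator vector $[c^+]-[c^-]$ of a single prescribed component $c$ of $\pi^{-1}(\delta)$, and you flag this yourself as the crux. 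The non-self-adjacent part of $\ker(\phi)$ is in worse shape: a cycle in the dual graph of the decomposition of $\tSigma$ need not be realized by any lift of a simple closed curve on $\Sigma$, let alone one whose topological type lies in $\sigma$, and ``an analogous argument'' is not given.

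For comparison, the paper's proof goes through the orthogonal complement instead: it invokes Theorem \ref{maintheorem:sccsymplectic2} to split $H = H^{\sigma}\oplus(H^{\sigma})^{\perp}$, and then shows that an oriented multicurve representing a class in $(H^{\sigma})^{\perp}$ has vanishing algebraic intersection with every lifted interior curve of $\cP$, hence can be surgered, preserving its homology class, until it is disjoint from $\pi^{-1}(\cP)$. If you are willing to quote Theorem \ref{maintheorem:sccsymplectic2}, your Mayer--Vietoris argument closes up immediately, since the needed nondegeneracy is exactly $(H^{\sigma})^{\perp}\cap H^{\sigma}=0$; without it, the gap remains and does not appear to be fillable by the elementary twisting considerations you describe.
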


We can also define $\HHPants_1(\tSigma;\Z)$, and pose the following question:
    
\begin{question}
\label{question:pants}
Let $\pi\colon \tSigma \rightarrow \Sigma$ be a finite branched cover between closed
oriented surfaces.
Is $\HHPants_1(\tSigma;\Z) = \HH_1(\tSigma;\Z)$?
\end{question}

Our proof of Theorems \ref{maintheorem:pants} and \ref{maintheorem:pants2} shows that Question \ref{question:pants} has a positive answer if Question \ref{question:symplectic} does.  Though we expect that Question \ref{question:symplectic} has a negative
answer, we do not know what answer to expect for Question \ref{question:pants}.

\subsection{Outline}
We prove Theorem \ref{maintheorem:putmanwieland} in \S \ref{section:putmanwieland}, Theorems
\ref{maintheorem:sccsymplectic} and \ref{maintheorem:sccsymplectic2} in \S \ref{section:sccsymplectic}, Theorems
\ref{maintheorem:pants} and \ref{maintheorem:pants2} in \S \ref{section:pants}, and Theorem \ref{maintheorem:nonsymplectic}
in \S \ref{section:nonsymplectic}.

\subsection{Acknowledgments}
We would like to thank Eduard Looijenga for his help, especially with Theorem \ref{maintheorem:sccsymplectic}.
We would also like to thank Aaron Landesman and Daniel Litt for helpful comments on a previous version
of this paper.

\section{Simple closed curve homology and Dehn twists}
\label{section:putmanwieland}

This section contains the proof of Theorem \ref{maintheorem:putmanwieland}.

\subsection{Notation}

Fix a closed oriented surface $\Sigma$ and a finite branched cover $\pi\colon \tSigma \rightarrow \Sigma$.
The surface $\tSigma$ is thus also a closed oriented surface.  Let $g \geq 0$ be its genus, let
\[H =\HH_1(\tSigma;\Q) \cong \Q^{2g},\]
and let $\omega$ be the algebraic intersection form on $H$.
By Poincar\'{e} duality, $\omega$ is a symplectic form, i.e., an alternating form that 
induces an isomorphism between $H$ and its dual.  The symplectic group $\Sp(H,\omega) \cong \Sp_{2g}(\Q)$ acts on
$H$.

\subsection{Lifting Dehn twists}
\label{section:lifttwists}

Recall from \S \ref{section:symplecticsubspace} that a nontrivial simple closed curve
on $\Sigma$ is a simple closed curve that avoids the 
marked points and does not bound a disk containing at most one marked point.
Consider a nontrivial simple closed curve $\gamma$ on $\Sigma$.  The preimage
$\pi^{-1}(\gamma)$ is a disjoint union of simple closed curves.  Enumerate them as
\[\pi^{-1}(\gamma) = \tgamma_1 \sqcup \cdots \sqcup \tgamma_k.\]
For each $1 \leq j \leq k$, the map
\[\pi|_{\tgamma_j}\colon \tgamma_j \rightarrow \gamma\]
is a finite unbranched cover.  Let $d_j$ be its degree.  Set
\begin{equation}
\label{eqn:defined}
d(\gamma) = \lcm(d_1,\ldots,d_k) \quad \text{and} \quad e_j = d(\gamma) / d_j \quad \text{for $1 \leq j \leq k$}.
\end{equation}
If $T_{\gamma}$ and $T_{\tgamma_j}$ denote the Dehn twists about $\gamma$ and $\tgamma_j$, then
$T_{\gamma}^{d(\gamma)}$ lifts to the product
\[T_{\tgamma_1}^{e_1} \cdots T_{\tgamma_k}^{e_k}.\]
Let $\ttau_{\gamma}$ be the image of this product of powers of Dehn twists in
$\Sp(H,\omega) \cong \Sp_{2g}(\Q)$.
The element $\ttau_{\gamma}$ acts on $H$ as follows:
\[\ttau_{\gamma}(h) = h + \sum_{j=1}^k e_j \omega(h,[\tgamma_j]) \cdot [\tgamma_j] \quad \text{for $h \in H$}.\]
For a set $\sigma$ of topological types of nontrivial simple closed curves on $\Sigma$,
define $D_{\sigma}$ to be the subgroup of $\Sp(H,\omega)$ generated by the set
of all $\ttau_{\gamma}$ as $\gamma$ ranges over nontrivial simple closed curves on $\Sigma$ 
whose topological type lies in $\sigma$.

\subsection{Fixed set of lifted twists}

As in \S \ref{section:symplecticsubspace}, let $\sigma$ be a set of topological
types of nontrivial simple closed curves on $\Sigma$ and define
\[H^{\sigma} = \HH_1^{\sigma}(\tSigma;\Q) \subset \HH_1(\tSigma;\Q) = H.\]
The following lemma will be fundamental to our paper:

\begin{lemma}
\label{lemma:twistfixed}
Let the notation be as above.  Then\footnote{Here the superscript indicates that
we are taking invariants: $H^{D_{\sigma}} = \Set{$h \in H$}{$d \cdot h = h$ for all $d \in D_{\sigma}$}$.} $H^{D_{\sigma}}$
equals the orthogonal complement $(H^{\sigma})^{\perp}$ of $H^{\sigma}$ with respect to $\omega$.
\end{lemma}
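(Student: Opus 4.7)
The plan is to prove the two inclusions separately; the $\supseteq$ direction is immediate from the formula for $\tilde\tau_\gamma$, and the $\subseteq$ direction will follow from a single positivity trick.

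For $\supseteq$, I would observe that if $h \in (H^\sigma)^\perp$ then $\omega(h, [\tilde\gamma_j]) = 0$ for every nontrivial simple closed curve $\gamma$ of topological type in $\sigma$ and every component $\tilde\gamma_j$ of $\pi^{-1}(\gamma)$. Plugging into the explicit formula
\[\tilde\tau_\gamma(h) = h + \sum_{j=1}^k e_j\, \omega(h,[\tilde\gamma_j])\, [\tilde\gamma_j]\]
shows $\tilde\tau_\gamma(h) = h$, so $h$ is fixed by every generator of $D_\sigma$ and hence by $D_\sigma$ itself.

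For $\subseteq$, the naive approach is to take $h \in H^{D_\sigma}$, use $\tilde\tau_\gamma(h) = h$ to get
\[\sum_{j=1}^k e_j\, \omega(h,[\tilde\gamma_j])\, [\tilde\gamma_j] = 0 \quad \text{in } H,\]
and try to extract $\omega(h,[\tilde\gamma_j]) = 0$ for each $j$. The obvious obstacle is that the classes $[\tilde\gamma_1], \ldots, [\tilde\gamma_k]$ can be linearly dependent in $H$ (disjoint simple closed curves on $\tilde\Sigma$ can certainly be homologous), so one cannot just read off the coefficients. I would overcome this by pairing the displayed relation with $h$ itself via $\omega$: applying the linear functional $\omega(h,-)$ to both sides yields
\[\sum_{j=1}^k e_j\, \omega(h,[\tilde\gamma_j])^2 = 0.\]
Now $e_j = d(\gamma)/d_j$ is a \emph{positive} integer by \eqref{eqn:defined}, and each $\omega(h,[\tilde\gamma_j])$ lies in $\Q$, so every term is non-negative. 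Hence every term vanishes, giving $\omega(h,[\tilde\gamma_j]) = 0$ for all $j$. Since this holds for every $\gamma$ whose topological type lies in $\sigma$, and the spanning set of $H^\sigma$ consists precisely of the classes $[\tilde\gamma_j]$ for such $\gamma$ (a loop in $\tilde\Sigma$ projecting to such a $\gamma$ must lie inside a single component of $\pi^{-1}(\gamma)$), we conclude $h \in (H^\sigma)^\perp$.

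The only step that requires any cleverness is recognizing the pairing with $h$; everything else is unwinding definitions. This is what I would expect to be the main conceptual point of the lemma, and the positivity of the coefficients $e_j$ is exactly what makes the argument work over $\Q$.
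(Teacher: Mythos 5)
Your proof is correct and follows exactly the paper's argument: the easy inclusion from the twist formula, and the reverse inclusion via pairing the fixed-point relation with $h$ itself to get $\sum_j e_j\,\omega(h,[\tgamma_j])^2 = 0$ and exploiting positivity of the $e_j$. Nothing to add.
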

\begin{proof}
Let $\gamma$ be a nontrivial simple closed curve on $\Sigma$ whose topological type
lies in $\sigma$.  As we did in
\S \ref{section:lifttwists} above, write $\pi^{-1}(\gamma)$ as a disjoint union of simple
closed curves on $\tSigma$:
\[\pi^{-1}(\gamma) = \tgamma_1 \sqcup \cdots \sqcup \tgamma_k.\]
As in that section, there are positive integers $e_1,\ldots,e_k$ such that 
the generator $\ttau_{\gamma} \in D_{\sigma}$ acts on $H$ as follows:
\[\ttau_{\gamma}(h) = h + \sum_{j=1}^k e_j \omega(h,[\tgamma_j]) \cdot [\tgamma_j] \quad \text{for $h \in H$}.\]
Each $[\tgamma_j]$ lies in $H^{\sigma}$, so it is immediate from this formula that $(H^{\sigma})^{\perp} \subset H^{D_{\sigma}}$.
For the other inclusion, consider some $h_0 \in H^{D_{\sigma}}$.  We then know that $\ttau_{\gamma}(h_0) = h_0$, so from
the above
\[\sum_{j=1}^k e_j \omega(h_0,[\tgamma_j]) \cdot [\tgamma_j] = 0.\]
Taking the algebraic intersection with $h_0$, we deduce that
\[\sum_{j=1}^k e_j \omega(h_0,[\tgamma_j])^2 = 0.\]
Since $e_j \geq 1$ for all $1 \leq j \leq k$, this implies that $\omega(h_0,[\tgamma_j])=0$ for all $1 \leq j \leq k$.
This holds for all choices of $\gamma$ and all components of the preimage $\pi^{-1}(\gamma)$.  These
generate $H^{\sigma}$, so we conclude that $h_0 \in (H^{\sigma})^{\perp}$, as desired.
\end{proof}

\subsection{Putman--Wieland conjecture}

We now prove Theorem \ref{maintheorem:putmanwieland}.

\begin{proof}[Proof of Theorem \ref{maintheorem:putmanwieland}]
We start by recalling the statement.  Let
$\pi\colon \tSigma \rightarrow \Sigma$ be a finite branched cover between closed oriented surfaces.
Let $\Mod(\Sigma,\tSigma)$ be the subgroup of the mapping class group $\Mod(\Sigma)$ that
lifts to $\tSigma$.
Consider some nonzero $\vec{v} \in \HHScc_1(\tSigma;\Q)$.
Our goal is to prove
the $\Mod(\Sigma,\tSigma)$-orbit of $\vec{v}$ is infinite.

Let $\sigma$ be the set of all topological types of nontrivial simple closed curves on
$\Sigma$.  Since $\HHScc_1(\tSigma;\Q) = \HH_1(\tSigma;\Q)$, Lemma \ref{lemma:twistfixed} implies that
$\vec{v}$ is not fixed by the group $D_{\sigma}$, so there exists
some nontrivial simple closed curve $\gamma$ on $\Sigma$ 
such that $\ttau_{\gamma}(\vec{v}) \neq \vec{v}$.  Some power of $\ttau_{\gamma}$ is the image
in the symplectic group of an element of $\Mod(\Sigma,\tSigma)$, so it is enough to prove
that the elements $\ttau_{\gamma}^n(\vec{v})$ as $n$ ranges over $\Z$ are all distinct.

Write $\pi^{-1}(\gamma)$ as a disjoint union of simple
closed curves on $\tSigma$:
\[\pi^{-1}(\gamma) = \tgamma_1 \sqcup \cdots \sqcup \tgamma_k.\]
There are then positive integers $e_1,\ldots,e_k$ such that
\[\ttau_{\gamma}(\vec{v}) = \vec{v} + \sum_{j=1}^k e_j \omega(\vec{v},[\tgamma_j]) \cdot [\tgamma_j].\]
Setting
\[\vec{w} = \sum_{j=1}^k e_j \omega(\vec{v},[\tgamma_j]) \cdot [\tgamma_j],\]
the fact that $\ttau_{\gamma}(\vec{v}) \neq \vec{v}$ implies that $\vec{w} \neq 0$.  For $n \in \Z$, we
have
$\ttau_{\gamma}^n(\vec{v}) = \vec{v} + n \vec{w}$.
Since $\vec{w} \neq 0$, the elements $\vec{v} + n \vec{w}$ as $n$ ranges over $\Z$ are all distinct, as desired.
\end{proof}

\section{The symplectic nature of simple closed curves}
\label{section:sccsymplectic}

This section contains the proof of Theorem \ref{maintheorem:sccsymplectic2} (which generalizes
Theorem \ref{maintheorem:sccsymplectic}).

\subsection{Notation}
\label{section:symplecticnotation}

The notation is similar to that of \S \ref{section:putmanwieland}: 
\begin{itemize}
\item $\pi\colon \tSigma \rightarrow \Sigma$ is a finite branched cover between closed oriented surfaces, and
$g$ is the genus of $\tSigma$.
\item $H = \HH_1(\tSigma;\Q) \cong \Q^{2g}$, and $\omega$ is the algebraic intersection form on $H$.
\item $\sigma$ is a set of topological types of nontrivial simple closed curves on $\Sigma$.
\item $D_{\sigma}$ is the subgroup of $\Sp(H,\omega) \cong \Sp_{2g}(\Q)$ generated by the elements
$\ttau_{\gamma}$ as $\gamma$ ranges over nontrivial simple closed curves on $\Sigma$ whose
topological type lies in $\sigma$.
\end{itemize}

\subsection{Symplectic criterion}

We will need the following criterion for a subspace of $H$ to be a symplectic subspace:

\begin{lemma}
\label{lemma:symplecticcriterion}
Let the notation be as above, and let $D$ be a subgroup of $\Sp(H,\omega)$.  Assume
that the action of $D$ on $H$ is semisimple.\footnote{That is, the $D$-representation $H$
decomposes as a direct sum of irreducible representations.}
Then\footnote{Here just like in Lemma \ref{lemma:twistfixed} the superscript indicates we are taking invariants.} $H^{D}$ is a symplectic subspace of $H$.
\end{lemma}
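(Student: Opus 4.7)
The plan is to prove the symplectic non-degeneracy of $\omega|_{H^D}$ by verifying directly that $H^D$ meets its $\omega$-orthogonal complement trivially. Write $V := H^D$. Since $\omega$ is $D$-invariant, both $V$ and $V^\perp$ are $D$-submodules of $H$, so $V \cap V^\perp$ is as well, and the task reduces to showing this intersection is zero.

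Fix $v \in V \cap V^\perp$ and consider the linear functional $f = \omega(v, -) \colon H \to \Q$. Because $v$ is $D$-fixed and $\omega$ is $D$-invariant, $f$ is $D$-equivariant, with $D$ acting trivially on $\Q$. The hypothesis $v \in V^\perp$ says $f|_V = 0$, so $f$ factors through $H/V$. By semisimplicity, pick a $D$-invariant decomposition $H = V \oplus W$ in which $W$ is the sum of the non-trivial isotypic components, so that $W^D = 0$. Under the identification $H/V \cong W$, the induced functional is an element of $(W^*)^D$. If this space is zero, then $f = 0$, and non-degeneracy of $\omega$ on all of $H$ forces $v = 0$, as desired.

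The step $(W^*)^D = 0$ is where the semisimplicity hypothesis genuinely bites, and is the main (small) obstacle. Decomposing $W$ as a direct sum of non-trivial irreducible $D$-representations, the dual of each summand is again a non-trivial irreducible (duality sends subobjects to quotient objects and thereby preserves irreducibility, and swaps invariants with coinvariants), hence has no nonzero $D$-invariants; summing over the summands gives $(W^*)^D = 0$. Once this is in hand, everything else is a formal manipulation with the non-degenerate $D$-invariant form $\omega$, so I do not anticipate further difficulty.
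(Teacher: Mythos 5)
Your proof is correct. You establish the same conclusion by a slightly different packaging of the same underlying mechanism: you show directly that the radical $H^D\cap (H^D)^{\perp}$ vanishes by producing, from a vector $v$ in it, an invariant functional on the sum $W$ of the nontrivial isotypic components and observing that $(W^{\ast})^D=0$; the paper instead never chooses a complement to $H^D$ and shows that the composite $H^D\to H\to H^{\ast}\to (H^D)^{\ast}$ is an isomorphism via the chain $H^D\cong (H^{\ast})^D\cong (H_D)^{\ast}\cong (H^D)^{\ast}$, with semisimplicity entering through the isomorphism $H^D\to H_D$. The two uses of semisimplicity are equivalent (your statement $(W^{\ast})^D=0$ is the dual of the paper's statement that invariants map isomorphically to coinvariants), and your reduction of non-degeneracy of $\omega|_{H^D}$ to triviality of the radical is valid here because we are over the field $\Q$ in finite dimensions, where injectivity of $H^D\to (H^D)^{\ast}$ implies surjectivity. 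Your version is a bit more concrete and elementary in presentation; the paper's version makes the invariants/coinvariants duality explicit, which it then reuses in spirit elsewhere. No gaps.
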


This is well-known; see, e.g., \cite[Lemme 4.14]{DeligneWeilII}.  For completeness, we include a proof.

\begin{proof}[Proof of Lemma \ref{lemma:symplecticcriterion}]
The symplectic form $\omega$ induces a $D$-equivariant
isomorphism 
\[\phi\colon H \xrightarrow{\cong} H^{\ast}.\]
We want to prove $\omega$ also induces an isomorphism between $H^D$ and $(H^D)^{\ast}$.  
Letting $\iota\colon H^D \hookrightarrow H$ be the inclusion and letting $\iota^{\ast}\colon H^{\ast} \rightarrow (H^D)^{\ast}$
be its dual,\footnote{This dual restricts a linear map $\lambda\colon H \rightarrow \Q$ to $H^D$.} 
our goal is equivalent to proving that the composition
\begin{equation}
\label{eqn:composition1}
H^D \xrightarrow{\iota} H \xrightarrow{\phi} H^{\ast} \xrightarrow{\iota^{\ast}} (H^D)^{\ast}
\end{equation}
is an isomorphism.

Since $\phi$ is a $D$-equivariant isomorphism, it restricts to an isomorphism on $D$-invariants, i.e., an
isomorphism
\[\phi^D\colon H^D \xrightarrow{\cong} \left(H^{\ast}\right)^D.\]
A linear map $\lambda\colon H \rightarrow \Q$ in $H^{\ast}$ is $D$-invariant if and only if it factors through the
$D$-coinvariants 
\[H_D = H / \SpanSet{$d \cdot h - h$}{$d \in D$ and $h \in H$}.\]  
We thus get an isomorphism
\[\mu\colon \left(H^{\ast}\right)^D \xrightarrow{\cong} \left(H_D\right)^{\ast}.\]
The projection $H \rightarrow H_D$ restricts to a map $\eta\colon H^D \rightarrow H_D$.  Since 
the action of $D$ is semisimple, the map $\eta$ is an isomorphism.\footnote{Indeed, 
if $H = H^D \oplus V_1 \oplus \cdots \oplus V_n$ with the $V_i$ nontrivial
irreducible representations of $D$, then
$H_D = (H^D)_D \oplus (V_1)_D \oplus \cdots \oplus (V_n)_D = H^D \oplus 0 \oplus \cdots \oplus 0 = H^D$.}
  Taking its dual, we get
an isomorphism
\[\eta^{\ast}\colon \left(H_D\right)^{\ast} \xrightarrow{\cong} \left(H^D\right)^{\ast}.\]
The composition
\begin{equation}
\label{eqn:composition2}
H^D \xrightarrow{\phi^D} \left(H^{\ast}\right)^D \xrightarrow{\mu} \left(H_D\right)^{\ast} \xrightarrow{\eta^{\ast}} \left(H^D\right)^{\ast}
\end{equation}
of isomorphisms is an isomorphism, and reflecting on the maps we see that the compositions \eqref{eqn:composition1} and
\eqref{eqn:composition2} are the same.  We conclude that \eqref{eqn:composition1} is an isomorphism, as desired.
\end{proof}

\subsection{Semisimplicity}

Our goal is to apply Lemma \ref{lemma:symplecticcriterion} to the group $D_{\sigma}$
from \S \ref{section:symplecticnotation}, which requires verifying the following:

\begin{lemma}
\label{lemma:dsemisimple}
Let the notation be as above.  Then the group $D_{\sigma}$ acts semisimply on $H$.
\end{lemma}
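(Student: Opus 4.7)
My plan is to establish semisimplicity of the $D_\sigma$-action on $H$ by showing that the Zariski closure $\overline{D_\sigma}$ of $D_\sigma$ in $\Sp(H,\omega)$ is a reductive algebraic group over $\Q$. In characteristic zero, reductivity of the Zariski closure is equivalent to complete reducibility on every rational representation, so this would immediately give the lemma.

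First, I would identify a larger group $\Gamma \subset \Sp(H,\omega)$ in which $D_\sigma$ sits as a normal subgroup: let $\Gamma$ be the image of all lifts to $\tSigma$ of mapping classes in $\Mod(\Sigma,\tSigma)$. If $\phi \in \Mod(\Sigma,\tSigma)$ has a lift $\tphi$, then $\tphi \ttau_\gamma \tphi^{-1}$ is the image in $\Sp(H,\omega)$ of a lift of $T_{\phi(\gamma)}^{d(\gamma)}$, i.e., equals $\ttau_{\phi(\gamma)}$; since $\phi$ is a homeomorphism it preserves topological type, so $\ttau_{\phi(\gamma)} \in D_\sigma$, giving normality. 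Then I would pass to a Galois closure $\pi' \colon \tSigma' \to \Sigma$ with deck group $G$, lift everything to $H' = \HH_1(\tSigma';\Q)$, and observe that the lifted version of $\Gamma$ centralizes the image of $G$ and hence lies in the centralizer $C_{\Sp(H',\omega')}(G)$. This centralizer decomposes along the $G$-isotypic decomposition of $H'$ as a product of classical groups (symplectic, general linear, or unitary, according to the real/complex/quaternionic type of each irreducible character), and is therefore reductive. Restricting to the $\text{Deck}(\tSigma'/\tSigma)$-invariant summand $H \subset H'$ preserves this structure.

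The next step is to show that the Zariski closure of $\Gamma$ inside this reductive centralizer is itself reductive (not merely contained in a reductive group). This relies on a density result about the image of the mapping class group in the centralizer of the deck group, which can be extracted from Looijenga's work on Prym representations (the authors explicitly credit Looijenga for help with the symplectic theorems in the acknowledgments). Once $\overline \Gamma$ is known to be reductive, the normality of $D_\sigma$ in $\Gamma$ upgrades to normality of $\overline{D_\sigma}$ in $\overline \Gamma$, and in characteristic zero every normal algebraic subgroup of a reductive group is reductive. Hence $\overline{D_\sigma}$ is reductive, and the lemma follows.

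The main obstacle will be establishing reductivity of $\overline \Gamma$: containment of $\Gamma$ in the reductive centralizer $C$ is elementary, but upgrading from ``contained in a reductive group'' to ``having reductive Zariski closure'' requires a genuine density argument—one must rule out, for instance, the possibility that $\overline{\Gamma}$ is contained in the unipotent radical of a proper parabolic subgroup of $C$. This is where I expect the input from Looijenga's Prym-variety framework to be indispensable.
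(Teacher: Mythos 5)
Your skeleton matches the paper's in outline: pass to the Zariski closure, exhibit $D_{\sigma}$ (or rather its closure) as a normal subgroup of the closure $\bG$ of the full lifted monodromy group, and then quote the fact that a (connected) normal algebraic subgroup of a reductive/semisimple group is again reductive/semisimple. The normality computation $\tphi\, \ttau_{\gamma}\, \tphi^{-1} = \ttau_{\phi(\gamma)}$ is also essentially what the paper does (it works instead with the normal subgroup of $\Mod(\Sigma)$ generated by the $T_{\gamma}^m$ and checks that its image has the same Zariski closure as $D_{\sigma}$, since both are generated by the same one-parameter subgroups $\ttau_{\gamma,t}$; this also supplies the connectedness of $\bD_{\sigma}$ that the "normal subgroup of a semisimple group" step needs).

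The genuine gap is exactly the step you flag as the "main obstacle": reductivity of $\oGamma$. Your proposed source for it does not exist. Containment of $\Gamma$ in the reductive centralizer $C_{\Sp(H',\omega')}(G)$ gives only an upper bound and, as you concede, cannot rule out $\oGamma$ having a nontrivial unipotent radical. Looijenga's paper on Prym representations treats finite \emph{abelian} covers only (this is precisely why the paper can prove $\HHScc_1 = \HH_1$ only in the unbranched abelian case, via Proposition \ref{proposition:sccabelian}); it contains no density statement for the monodromy of a general finite branched cover inside the centralizer of the deck group, and no such density result is known --- indeed, understanding that image is closely tied to the Putman--Wieland conjecture itself. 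The paper's actual input at this point is of a completely different nature: it realizes $\Gamma$ as the monodromy of a fiberwise branched cover $\tcU \rightarrow \cM_{\Gamma}(\Sigma)$ of the universal curve over a torsion-free finite cover of moduli space, and invokes Deligne's semisimplicity theorem \cite[Corollaire 4.2.9]{DeligneHodgeII} to conclude that $\bG$ is semisimple. That Hodge-theoretic theorem requires no density argument and is the missing idea; without it (or some substitute of comparable strength) your proof cannot be completed.
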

\begin{proof}
Let $\bD_{\sigma}$ be the Zariski closure of $D_{\sigma}$ in $\Sp(H,\omega)$.  It is enough
to prove that $\bD_{\sigma}$ acts semisimply on $H$.  For this, it is 
enough to prove that $\bD_{\sigma}$ is a semisimple algebraic group.\footnote{See
\cite{BorelBook, MilneBook} for textbook references on algebraic groups.  One
key property of semisimple algebraic groups over $\Q$ is that all of their finite-dimensional representations
are semisimple \cite[Proposition 22.41]{MilneBook}.}

Regard $\Sigma$ as a closed surface with marked points at the branch points
of $\pi\colon \tSigma \rightarrow \Sigma$.  To simplify things, if there are no branch points introduce a
single additional marked point on $\Sigma$, and regard its preimage in $\tSigma$ as a collection of branch
points of order $1$.  Let $\cM(\Sigma)$ be the moduli space of Riemann surfaces $S$ with marked
points such that $S \cong \Sigma$ as surfaces with marked points.  The (orbifold) fundamental group of $\cM(\Sigma)$ is
thus the mapping class group $\Mod(\Sigma)$.  

We can find a finite-index subgroup $\Gamma$ of $\Mod(\Sigma)$ such that each element
of $\Gamma$ can be lifted to a homeomorphism of $\tSigma$ fixing all the marked points.
Since there is at least one marked point, these lifts are unique up to homotopy, so $\Gamma$
acts on $\HH_1(\tSigma;\Q)$ in a well-defined way.
Shrinking $\Gamma$ if necessary, we can also assume that $\Gamma$ is torsion-free.
Let $\cM_{\Gamma}(\Sigma)$ be the cover of $\cM(\Sigma)$ corresponding to $\Gamma$.

Since $\Gamma$ is torsion-free, $\cM_{\Gamma}(\Sigma)$ is a fine moduli space.  It thus has a universal curve
$\cU \rightarrow \cM_{\Gamma}(\Sigma)$ whose fiber over $S \in \cM_{\Gamma}(\Sigma)$ is $S$.  Replacing
$\Gamma$ by a deeper finite-index subgroup if necessary, we can find a fiberwise branched cover
$\tcU \rightarrow \cM_{\Gamma}(\Sigma)$ of $\cU \rightarrow \cM_{\Gamma}(\Sigma)$ whose fibers are
the branched cover $\tSigma$ of $\Sigma$.

The monodromy representation of $\pi_1(\cM_{\Gamma}(\Sigma)) \cong \Gamma$ on $\HH_1$ of the fibers
is thus exactly the action of $\Gamma$ on $H=\HH_1(\tSigma;\Q)$ obtained by lifting mapping classes
through the branched cover $\tSigma \rightarrow \Sigma$.  The image of this representation lies
in $\Sp(H,\omega)$.  Let $\bG$ be the Zariski closure in $\Sp(H,\omega)$ of the image of $\Gamma$.
Deligne's semisimplicity theorem \cite[Corollaire 4.2.9]{DeligneHodgeII} implies that $\bG$ is a semisimple algebraic group.

From the definition \eqref{eqn:defined} of $d(\gamma)$ for nontrivial simple closed curves 
$\gamma$ on $\Sigma$,
it only achieves finitely many values (depending on the degree of the cover $\tSigma \rightarrow \Sigma$).
Pick some $m \geq 1$ such that the following two properties hold for each nontrivial 
simple closed curve $\gamma$ whose topological type lies in $\sigma$:
\begin{itemize}
\item $d(\gamma)$ divides $m$.
\item $T_{\gamma}^m \in \Gamma$.
\end{itemize}
Let $E$ be the subgroup of $\Mod(\Sigma)$ generated by all the $T_{\gamma}^m$ as
$\gamma$ ranges over nontrivial simple closed curves on $\Sigma$ whose topological type
lies in $\sigma$.  For such a $\gamma$, we have
\[f T_{\gamma}^m f^{-1} = T_{f(\gamma)}^m \quad \text{for all $f \in \Mod(\Sigma)$}.\]
It follows that $E$ is a normal
subgroup of $\Mod(\Sigma)$.  By construction, $E \subset \Gamma$.  For each nontrivial
simple closed curve $\gamma$ on $\Sigma$ whose topological type lies in $\sigma$,
recall that $\ttau_{\gamma}$ is the image of $T_{\gamma}^{d(\gamma)}$ in $\Sp(H,\omega)$.  The
Zariski closure in $\Sp(H,\omega)$ of the subgroup generated by $\ttau_{\gamma}$
is\footnote{The point here is that the subgroup generated by $\ttau_{\gamma}$ is the integer
points in the one-parameter subgroup $\ttau_{\gamma,t}$, and the Zariski closure of $\Z$ in $\Q$ is $\Q$.} 
the one-parameter subgroup $\ttau_{\gamma,t}$ defined by
\[\ttau_{\gamma,t}(h) = h + \sum_{j=1}^k t e_j \omega(h,[\tgamma_j]) \cdot [\tgamma_j] \quad \text{for $h \in H$ and $t \in \Q$}.\]
The group $\bD_{\sigma}$ is generated by these one-parameter subgroups.\footnote{This uses the fact that
the subgroup of an algebraic group generated by a set of algebraic subgroups is algebraic, i.e., Zariski closed \cite[Proposition 2.2]{BorelBook}.}
Since one-parameter subgroups are connected, we deduce that $\bD_{\sigma}$ is connected.

The Zariski closure in $\Sp(H,\omega)$ of the subgroup generated by 
\[T_{\gamma}^m = \left(T_{\gamma}^{d(\gamma)}\right)^{m/d(\gamma)}\] 
is the same one-parameter subgroup $\ttau_{\gamma,t}$.  It follows that
the Zariski closure of the image of $E$ in $\Sp(H,\omega)$ is also $\bD_{\sigma}$.
Since $E$ is a normal subgroup of $\Mod(\Sigma)$, it follows that
$\bD_{\sigma}$ is a normal subgroup of $\bG$.  Since $\bG$ is semisimple, so\footnote{Every connected normal
subgroup of a semisimple algebraic group is semisimple \cite[Theorem 21.51]{MilneBook}.}
is $\bD_{\sigma}$, as desired.
\end{proof}

\subsection{Symplectic subspace}

We now prove Theorem \ref{maintheorem:sccsymplectic2}.

\begin{proof}[Proof of Theorem \ref{maintheorem:sccsymplectic2}]
The statement we must prove is as follows.
Let $\pi\colon \tSigma \rightarrow \Sigma$ be a finite branched cover between
closed oriented surfaces 
and $\sigma$ be a set of topological types of nontrivial simple closed curves on $\Sigma$.
We must show that $H^{\sigma}=\HH_1^{\sigma}(\tSigma;\Q)$ is a symplectic subspace of 
$H=\HH_1(\tSigma;\Q)$,
or equivalently that $(H^{\sigma})^{\perp}$ is a symplectic subspace.
Lemma \ref{lemma:twistfixed} implies that
\[(H^{\sigma})^{\perp} = H^{D_{\sigma}},\]
and Lemma \ref{lemma:dsemisimple} says that the group $D_{\sigma}$ acts semisimply on $H$.
The result thus follows from Lemma \ref{lemma:symplecticcriterion}.
\end{proof}

\section{Pants homology}
\label{section:pants}

In this section, we prove Theorem \ref{maintheorem:pants2} (which implies
Theorem \ref{maintheorem:pants}).

\begin{proof}[Proof of Theorem \ref{maintheorem:pants2}]
We first recall the statement.
Let $\pi\colon \tSigma \rightarrow \Sigma$ be 
a finite branched cover between closed oriented surfaces
and $\cP$ be a pants decomposition of $\Sigma$.  Let $\sigma$ be the set of topological types of nontrivial curves
appearing in $\cP$.  We must prove that $H=\HH_1(\tSigma;\Q)$ is spanned by $H^{\sigma}=\HH_1^{\sigma}(\tSigma;\Q)$ and
the set of homology classes of cycles $\tgamma$ on $\tSigma$ such that $\pi(\tgamma)$ is disjoint
from all curves in $\cP$.

Theorem \ref{maintheorem:sccsymplectic2} says that $H^{\sigma}$ is a symplectic subspace of $H$, so
\[H = H^{\sigma} \oplus \left(H^{\sigma}\right)^{\perp}.\]
It is thus enough to prove that $(H^{\sigma})^{\perp}$ is spanned by
the homology classes of cycles $\tgamma$ on $\tSigma$ such that
$\pi(\tgamma)$ is disjoint from all the curves in $\cP$.  

Recall that we are working with homology with rational coefficients.  Every
element of $H$ (and hence $(H^{\sigma})^{\perp}$) is a multiple of an integral
class, and every integral class can be represented by an oriented multicurve.
Therefore, consider an oriented multicurve
$\tgamma$ on $\tSigma$ such that $[\tgamma] \in \left(H^{\sigma}\right)^{\perp}$.
It is enough to prove that
$\tgamma$ is homologous to an oriented multicurve $\tgamma'$ such that
$\pi(\tgamma)$ is disjoint from all the curves in $\cP$.

Our pants decomposition $\cP$ looks like the following:\\
\centerline{\psfig{file=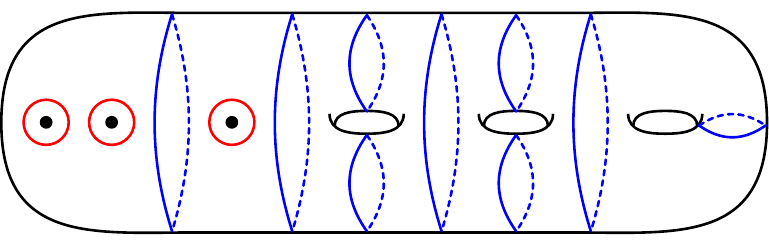,scale=1}}
Write $\cP = \{\delta_1,\ldots,\delta_n\}$.
Call the $\delta_j$ that bound disks containing marked points the {\em boundary loops} (red in the above figure)
and the other $\delta_j$ the {\em interior loops} (blue in the above figure).  
Enumerate the components of $\pi^{-1}(\delta_j)$ as $j$ ranges over $1 \leq j \leq n$ as
$\{\tdelta_1,\ldots,\tdelta_m\}$.  Call the $\tdelta_j$ that project to boundary loops the {\em lifted boundary loops}
and the $\tdelta_j$ that project to interior loops the {\em lifted interior loops}.

Put the oriented multicurve $\tgamma$ in general position with respect to the $\tdelta_j$.
The lifted boundary loops bound disks in $\tSigma$ containing a single branch point.  Isotope
$\tgamma$ such that it is disjoint from all these disks, and in particular is disjoint from
all the lifted boundary loops.

Let $\omega$ be the algebraic intersection form.  Consider a lifted interior loop $\tdelta_j$.  Since $[\tdelta_j] \in H^{\sigma}$ and $[\tgamma] \in (H^{\sigma})^{\perp}$, we have
$\omega([\tgamma],[\tdelta_j]) = 0$.  This implies that the number of positively oriented intersection points
of $\tgamma$ with $\tdelta_j$ is the same as the number of negatively oriented intersection points.
We can then modify
$\tgamma$ as follows to make it disjoint from $\tdelta_j$:\\
\centerline{\psfig{file=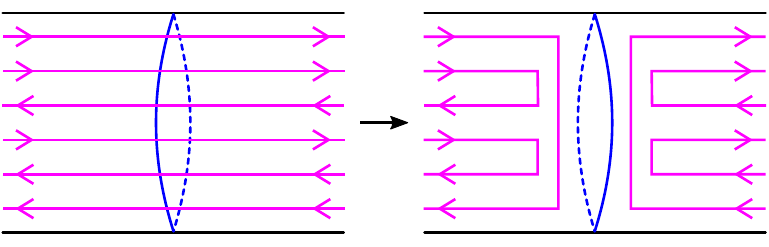,scale=1}}
The result is an oriented multicurve that is homologous to $\tgamma$.  Doing this for each
lifted interior loop, we obtain an oriented multicurve $\tgamma'$ such that $[\tgamma'] = [\tgamma]$
and such that $\tgamma'$ is disjoint from all the $\tdelta_j$ and does not lie in any of the disks
bounded by lifted boundary loops.  This implies $\pi(\tgamma)$ is disjoint from
all the curves in $\cP$, as desired.
\end{proof}

\section{A non-symplectic example}
\label{section:nonsymplectic}

This section contains the proof of Theorem \ref{maintheorem:nonsymplectic}, which asserts that for all closed oriented surfaces
$\Sigma$ of genus at least $2$, there exists
a finite unbranched cover $\pi\colon \tSigma \rightarrow \Sigma$ such that for
$\sigma$ the set of nonseparating simple closed curves on $\Sigma$, the subspace
$\HH_1^{\sigma}(\tSigma;\Z)$ is not a symplectic subspace of $\HH_1(\tSigma;\Z)$..

\subsection{Reduction}

We start with the following.

\begin{lemma}
\label{lemma:reduction}
Let $V$ be a finitely generated free abelian group equipped with a symplectic form and let $W$ be
a subgroup of $V$.  Assume that $W$ is a symplectic subspace of $V$ and that $W \otimes_{\Z} \Q = V \otimes_{\Z} \Q$.
Then $W = V$.
\end{lemma}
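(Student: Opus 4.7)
The plan is to show $W = V$ by forcing the index $n := [V : W]$ to equal $1$. The hypothesis $W \otimes_\Z \Q = V \otimes_\Z \Q$ guarantees that $n$ is finite, so it remains to extract $n = 1$ from the assumption that $W$ is a symplectic subspace.

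First I would dualize the inclusion $\iota \colon W \hookrightarrow V$ to get $\iota^\ast \colon V^\ast \to W^\ast$, where $V^\ast = \Hom(V,\Z)$ and $W^\ast = \Hom(W,\Z)$. This map is injective: a functional vanishing on $W$ vanishes on $V$, since every $v \in V$ satisfies $nv \in W$ and $\Z$ is torsion-free. Applying $\Hom(-,\Z)$ to the short exact sequence $0 \to W \to V \to V/W \to 0$ identifies $\coker(\iota^\ast)$ with $\Ext^1(V/W,\Z) \cong V/W$, a group of order $n$.

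Next I would use the symplectic structures. They give isomorphisms $\phi_V \colon V \xrightarrow{\cong} V^\ast$ and $\phi_W \colon W \xrightarrow{\cong} W^\ast$, and the hypothesis that $W$ is a symplectic subspace of $V$ says precisely that $\phi_W = \iota^\ast \circ \phi_V \circ \iota$. I would then compute the index $[W^\ast : \phi_W(W)]$ in two ways. On the one hand, since $\phi_W$ is an isomorphism, this index is $1$. On the other hand, from the chain
\[
\phi_W(W) \;=\; \iota^\ast \phi_V(W) \;\subseteq\; \iota^\ast(V^\ast) \;\subseteq\; W^\ast,
\]
the outer factor $[W^\ast : \iota^\ast(V^\ast)]$ equals $n$ by the cokernel computation, while the inner factor equals $[V^\ast : \phi_V(W)] = [V:W] = n$ (using injectivity of $\iota^\ast$ and the fact that $\phi_V$ is an isomorphism). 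Multiplying yields $n^2$, so $n^2 = 1$ and $n = 1$.

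The whole argument is really bookkeeping; the step I expect to require the most care is the double-index computation, where one must notice that the index gets multiplied by $n$ \emph{both} because $\phi_V(W)$ sits with index $n$ inside $V^\ast$ and because $\iota^\ast(V^\ast)$ sits with index $n$ inside $W^\ast$. This doubling reflects the classical principle that the discriminant of a symplectic lattice rescales by $[V:W]^2$ under passage to a finite-index sublattice, so a unimodular form cannot restrict to a unimodular form unless the sublattice is already everything.
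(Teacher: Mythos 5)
Your argument is correct, but it takes a different route from the paper's. The paper exploits the hypothesis that $W$ is a symplectic subspace in the most direct way possible: unimodularity of $\omega|_W$ gives an internal orthogonal decomposition $V = W \oplus W^{\perp}$ over $\Z$ (project $v$ onto $W$ via $\phi_W^{-1}\bigl(\omega(v,-)|_W\bigr)$), and then $W \otimes_{\Z} \Q = V \otimes_{\Z} \Q$ forces $W^{\perp} \otimes_{\Z} \Q = 0$, hence $W^{\perp} = 0$ and $W = V$ --- three lines, no duals, no indices. You instead run the classical discriminant computation: the finite index $n = [V:W]$ contributes twice to $[W^{\ast} : \phi_W(W)]$, once from $[V^{\ast} : \phi_V(W)]$ and once from $\coker(\iota^{\ast}) \cong \Ext^1(V/W,\Z)$, so unimodularity of $\omega|_W$ forces $n^2 = 1$. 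Every step checks out (the injectivity of $\iota^{\ast}$, the identification of its cokernel with $V/W$, and the factorization $\phi_W = \iota^{\ast}\circ\phi_V\circ\iota$ are all correct), and your version has the merit of isolating the general principle that the discriminant rescales by $[V:W]^2$; the paper's version is shorter and avoids homological algebra entirely. Either proof is acceptable.
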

\begin{proof}
Since $W$ is a symplectic subspace of $V$, we have $V = W \oplus W^{\perp}$.  Since
\[W \otimes_{\Z} \Q = V \otimes_{\Z} \Q = \left(W \otimes_{\Z} \Q\right) \oplus \left(W^{\perp} \otimes_{\Z} \Q\right),\]
it follows that $W^{\perp} \otimes_{\Z} \Q = 0$.  We conclude that $W^{\perp} = 0$ and thus
that $W = V$.
\end{proof}

It is therefore enough to construct a finite unbranched cover $\pi\colon \tSigma \rightarrow \Sigma$
such that 
\[\HH_1^{\sigma}(\tSigma;\Q) = \HH_1(\tSigma;\Q) \quad \text{but} \quad \HH_1^{\sigma}(\tSigma;\Z) \neq \HH_1(\tSigma;\Z).\]
For $\ell \geq 2$, let $\pi\colon \Sigma[\ell] \rightarrow \Sigma$ be the cover corresponding
to the homomorphism
\[\pi_1(\Sigma) \longrightarrow \HH_1(\Sigma;\Z/\ell).\]
By the above, it is enough to prove the following theorem, which we will do in
the remainder of this section:

\begin{theorem}
\label{theorem:nonsymplectic}
Let $\Sigma$ be a closed oriented surface of genus at least $2$ and $\sigma$
be the set of nonseparating simple closed curves on $\Sigma$.  Fix some $\ell \geq 2$.
The following then hold:
\begin{itemize}
\item[(i)] We have $\HH_1^{\sigma}(\Sigma[\ell];\Q) = \HH_1(\Sigma[\ell];\Q)$.
\item[(ii)] If $\ell \geq 3$, then $\HH_1^{\sigma}(\Sigma[\ell];\Z) \neq \HH_1(\Sigma[\ell];\Z)$.
\end{itemize}
\end{theorem}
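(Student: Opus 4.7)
The plan hinges on the character decomposition of $H := \HH_1(\Sigma[\ell]; \Q)$ under the deck group $D := \HH_1(\Sigma;\Z/\ell)$. Complexifying,
\[
H \otimes_{\Q} \C \;=\; \bigoplus_{\chi \in \hat D} V_\chi, \qquad V_\chi \cong \HH_1(\Sigma;\C_\chi),
\]
where $\C_\chi$ is the rank-one local system on $\Sigma$ determined by $\chi$. An Euler characteristic computation gives $\dim_{\C} V_1 = 2g$ and $\dim_{\C} V_\chi = 2g - 2$ for $\chi \neq 1$. For any nonseparating simple closed curve $\gamma$, the class $[\gamma] \in D$ has order exactly $\ell$ (since $[\gamma]$ is primitive in $\HH_1(\Sigma;\Z)$), so $\pi^{-1}(\gamma)$ has $\ell^{2g-1}$ components each of degree $\ell$ over $\gamma$; a direct character calculation shows that the $\chi$-component of any lift $[\tilde\gamma]$ is nonzero iff $\chi([\gamma]) = 1$.

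For part (i), I would deploy the tools already developed in this paper. Theorem~\ref{maintheorem:sccsymplectic2} gives that $V := \HH_1^\sigma(\Sigma[\ell];\Q)$ is a symplectic, $\Mod(\Sigma,\Sigma[\ell])$-invariant subspace of $H$, so $V^\perp$ is another invariant symplectic complement; by Lemma~\ref{lemma:twistfixed}, $V^\perp = H^{D_\sigma}$. The trivial isotypic $V_1 \cong \HH_1(\Sigma;\Q)$ is spanned by nonseparating simple closed curves on $\Sigma$ (classical change-of-coordinates principle), so $V_1 \subseteq V$ and therefore $V^\perp \subseteq \bigoplus_{\chi\neq 1} V_\chi$. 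Since $\Mod(\Sigma)$ is generated by Dehn twists about nonseparating simple closed curves, one can strengthen the normality argument in Lemma~\ref{lemma:dsemisimple} to identify the identity component of $\overline{D_\sigma}$ with that of the full algebraic monodromy group $\bG$. It then suffices to invoke Looijenga's description of the Prym-type representations of the level-$\ell$ mapping class group on each nontrivial $V_\chi$: these representations admit no fixed vector, so $V^\perp \cap V_\chi = 0$ for $\chi \neq 1$, yielding $V^\perp = 0$.

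For part (ii), the plan is to construct a canonical $D$-equivariant mod-$\ell$ obstruction that detects classes outside $\HH_1^\sigma$. Let $G = \pi_1(\Sigma)$ and $F = \pi_1(\Sigma[\ell])$. The five-term exact sequence for $1 \to F \to G \to D \to 1$ reads
\[
\HH_2(\Sigma;\Z) \xrightarrow{\tau} \HH_2(D;\Z) \longrightarrow \HH_1(\Sigma[\ell];\Z)_D \longrightarrow \HH_1(\Sigma;\Z) \longrightarrow D \longrightarrow 0.
\]
Using $\HH_2(D;\Z) = \Lambda^2 D$ and $\tau(1) = \overline\omega := \sum_i \overline a_i \wedge \overline b_i$, this yields an injection $\Lambda^2 D / \langle \overline\omega \rangle \hookrightarrow \HH_1(\Sigma[\ell];\Z)_D$. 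Reducing modulo $\ell$ and projecting off the (split) $D$-summand of the coinvariants produces a canonical $D$-equivariant homomorphism
\[
\Psi\colon \HH_1(\Sigma[\ell];\Z/\ell) \longrightarrow \Lambda^2 D / \langle \overline\omega \rangle.
\]
A Fox-calculus/Magnus-expansion argument shows $\Psi([\tilde\gamma] \bmod \ell) = 0$ for any lift $\tilde\gamma$ of a nonseparating simple closed curve $\gamma$: viewed in $G$, $\tilde\gamma$ is conjugate to $\gamma^\ell$, and the commutator content of a pure $\ell$th power vanishes in $\Lambda^2 G^{\mathrm{ab}}/\langle\omega\rangle$. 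For $\ell \geq 3$ and $g \geq 2$, the target $\Lambda^2 D / \langle\overline\omega\rangle$ is a nonzero $\Z/\ell$-module of rank $\binom{2g}{2} - 1 \geq 5$, and as a witness I would use an integral cycle representing the commutator $[\tilde\alpha, \tilde\beta]$ for transverse nonseparating simple closed curves $\alpha, \beta$ with $\Z/\ell$-linearly independent classes in $D$: its $\Psi$-image is the simple wedge $\overline\alpha \wedge \overline\beta$, which is nonzero modulo $\langle\overline\omega\rangle$ because $\overline\omega$ has rank $2g$ and so cannot be a scalar multiple of a pure wedge once $g \geq 2$.

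The hardest step will be carrying out part (ii) rigorously: making the construction of $\Psi$ precise, verifying via Fox calculus that $\Psi$ vanishes on the mod-$\ell$ reduction of every nonseparating simple closed curve lift, producing the commutator witness as an honest integral cycle on $\Sigma[\ell]$ with the claimed $\Psi$-image, and pinpointing the essential role of the hypothesis $\ell \geq 3$ (which must enter in the witness step, since the target $\Lambda^2 D/\langle\overline\omega\rangle$ is nonzero even at $\ell = 2$, so a more subtle collapsing of the commutator witness into the span of nonseparating lifts must be excluded for $\ell \geq 3$). Part (i), by contrast, is comparatively routine once the Prym-representation input from Looijenga is granted.
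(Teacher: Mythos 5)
Your part (ii) strategy is the right kind of argument (a two-step nilpotent, mod-$\ell$ obstruction detecting commutator content, as in the paper and in Irmer's original proof), but its central vanishing claim is false as stated when $\ell$ is even. Writing $G=\pi_1(\Sigma)$ and $F=\pi_1(\Sigma[\ell])$, your $\Psi$ is computed in $F/[G,F]$, i.e.\ in the deck-group coinvariants of $\HH_1(\Sigma[\ell];\Z)$, whose torsion subgroup is $\wedge^2(\Z/\ell)^{2g}/\langle\overline{\omega}\rangle$. Since a lift $\tgamma$ of a nonseparating curve is conjugate to $\gamma^{\ell}$, you need the assignment $x\mapsto x^{\ell}\bmod [G,F]$ to stay clear of the torsion; but $(xy)^{\ell}=x^{\ell}y^{\ell}[y,x]^{\ell(\ell-1)/2}$, the image of $[y,x]$ in $F/[G,F]$ has order dividing $\ell$, and $\ell(\ell-1)/2\equiv \ell/2\pmod{\ell}$ when $\ell$ is even. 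Concretely, $a_1a_2$ is represented by a nonseparating simple closed curve and $\Psi\bigl((a_1a_2)^{\ell}\bigr)=(\ell/2)\,\overline{a_2}\wedge\overline{a_1}\neq 0$ in $\wedge^2(\Z/\ell)^{2g}/\langle\overline{\omega}\rangle$, so $\Psi$ does not kill $\HH_1^{\sigma}$. The repair is to pass to the further quotient $\wedge^2(\Z/\hell)^{2g}/\langle\overline{\omega}\rangle$ with $\hell=\ell/2$; this is precisely the role of $\hell$ and of Lemma~\ref{lemma:powerlemma} in the paper, and it is also where the hypothesis $\ell\geq 3$ genuinely enters (for $\ell=2$ this quotient collapses), not in the witness step as you guessed. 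Two smaller points: the projection of $F/[G,F]\cong\Z^{2g}\oplus(\text{torsion})$ onto its torsion subgroup is not canonical, so $\Psi$ needs a choice -- the clean fix, as in the paper, is to quotient by the subgroup generated by the $\ell$-th powers, or to first kill the $b_i$ so that the power subgroup becomes an honest free direct factor (Lemma~\ref{lemma:powersubgroup}); your witness $\overline{\alpha}\wedge\overline{\beta}$ is fine once these repairs are made.

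For part (i) your outline takes a different route from the paper's and defers the real content to two unproved assertions. First, you claim the identity component of the Zariski closure of $D_{\sigma}$ equals that of the full algebraic monodromy group $\bG$; normality (which is what the proof of Lemma~\ref{lemma:dsemisimple} actually establishes) does not give this, since the normal closure of the powers $T_{\gamma}^{m}$ can a priori have image with strictly smaller Zariski closure -- this is exactly the phenomenon exploited in the quotients of \cite{KoberdaSantharoubane}. Second, you invoke Looijenga for the absence of invariant vectors in the nontrivial isotypic pieces, but the paper explicitly notes that this statement is only implicit in \cite{LooijengaPrym}; Proposition~\ref{proposition:sccabelian} exists precisely to prove it, by reducing to a cyclic subcover $\Sigma[\phi]$ and exhibiting, for each nonzero class, an explicit lifted twist about a nonseparating curve whose orbit on that class is infinite. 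As written, your part (i) is a plausible plan but not yet a proof.
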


Part (ii) is a theorem of Irmer \cite[Lemma 6]{IrmerCovers}.  We will give
a simplified version of her argument below that avoids most of its complicated combinatorial group
theory.

\subsection{Rational equality}

We start by proving part (i) of Theorem \ref{theorem:nonsymplectic}.  In fact, we prove
a more general result:

\begin{proposition}
\label{proposition:sccabelian}
Let $\Sigma$ be a closed surface and $\sigma$ be the set of nonseparating simple
closed curves on $\Sigma$.  Let $\tSigma \rightarrow \Sigma$ be a finite unbranched abelian
cover.  Then $\HH_1^{\sigma}(\tSigma;\Q) = \HH_1(\tSigma;\Q)$.
\end{proposition}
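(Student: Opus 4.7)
The plan is to decompose $H := \HH_1(\tSigma;\C)$ under the finite abelian deck group $A = \mathrm{Gal}(\tSigma/\Sigma)$: writing $H = \bigoplus_{\chi \in \hat A} V_\chi$ for the character isotypic decomposition, and noting that $\HH_1^\sigma(\tSigma;\C)$ is $A$-stable, it suffices to show $V_\chi \subset \HH_1^\sigma(\tSigma;\C)$ for every character $\chi$ of $A$.

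For the trivial character, $V_{\mathbf 1} = H^A$ is identified via transfer with $\HH_1(\Sigma;\C)$. The latter is spanned by classes of nonseparating simple closed curves on $\Sigma$ (change-of-coordinates principle), and for each such $\gamma$ the total preimage $\sum_j [\tgamma_j]$ is an $A$-invariant class in $\HH_1^\sigma$ that maps to a nonzero multiple of $[\gamma]$ under transfer; hence $V_{\mathbf 1} \subset \HH_1^\sigma$.

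For a nontrivial $\chi$, let $\phi\colon \HH_1(\Sigma;\Z) \to A$ be the classifying homomorphism. First I would exhibit nonzero elements of $V_\chi \cap \HH_1^\sigma$. The finite-index sublattice $K_\chi := \ker(\chi\circ\phi) \subset \HH_1(\Sigma;\Z) \cong \Z^{2g}$ contains primitive vectors of $\Z^{2g}$ when $g \geq 1$, as one sees from a short argument using Smith normal form. By the change-of-coordinates principle, any such primitive class is realized by a nonseparating simple closed curve $\gamma$. Since $\chi$ then vanishes on the stabilizer $\langle\phi[\gamma]\rangle \subset A$ of a lift $\tgamma_0$ of $\gamma$, the $\chi$-isotypic projection
\[
e_\chi[\tgamma_0] \;=\; \tfrac{1}{|A|}\sum_{a\in A}\chi(a)^{-1}\,a\cdot[\tgamma_0]
\]
is nonzero in $V_\chi$, and since each summand $a\cdot[\tgamma_0]$ is itself the class of a component of $\pi^{-1}(\gamma)$, it lies in $V_\chi \cap \HH_1^\sigma$.

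The closing step, and the main obstacle, is to upgrade $V_\chi \cap \HH_1^\sigma \neq 0$ to equality $V_\chi \cap \HH_1^\sigma = V_\chi$. My plan is to combine Theorem \ref{maintheorem:sccsymplectic2} with Lemma \ref{lemma:twistfixed}: since $\HH_1^\sigma$ is a symplectic subspace of $H$ with orthogonal complement $H^{D_\sigma}$, it suffices to show $V_\chi \cap H^{D_\sigma} = 0$. A direct computation of the transvection action of $\ttau_\gamma$ on $v \in V_\chi$ (with all $e_j=1$ in the unbranched case) gives
\[
\ttau_\gamma v \;=\; v \;+\; \tfrac{|A|}{|\langle \phi[\gamma]\rangle|}\,\omega(v,[\tgamma_0])\,e_\chi[\tgamma_0],
\]
which is nontrivial precisely when $\chi(\phi[\gamma])=1$; hence $v \in V_\chi^{D_\sigma}$ iff $\omega(v, [\tgamma_0]) = 0$ for every lifted component of every nonseparating simple closed curve $\gamma$ with $\chi(\phi[\gamma]) = 1$. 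By nondegeneracy of $\omega$ restricted to $V_\chi \times V_{\chi^{-1}}$, this reduces to showing that the classes $e_{\chi^{-1}}[\tgamma_0]$ (over all such $\gamma$) span $V_{\chi^{-1}}$. I would prove this spanning by an explicit Fox-calculus computation in the $\chi$-twisted cellular chain complex of $\Sigma$ coming from a one-vertex CW structure, together with the observation that primitive classes in $K_\chi$ are abundant enough in $\Z^{2g}$ to produce the requisite $2g - 2$ linearly independent twisted cycles — augmenting with the $\Mod(\Sigma,\tSigma)$-invariance of both sides if needed to reduce to a single topological type.
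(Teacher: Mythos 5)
Your setup overlaps substantially with the paper's: both decompose $\HH_1(\tSigma;\Q)$ into isotypic pieces for the deck group, and both run the argument through Lemma \ref{lemma:twistfixed} (the paper proves the proposition by showing $\HH_1(\tSigma;\Q)^{D_\sigma}=0$, which is your $V_\chi\cap H^{D_\sigma}=0$ for every $\chi$). Your trivial-character step via transfer is fine. The problem is the closing step, which you correctly identify as the main obstacle but do not actually carry out — and, worse, the duality reduction you propose is circular. After your (correct) transvection computation, the statement ``the classes $e_{\chi^{-1}}[\tgamma_0]$ span $V_{\chi^{-1}}$'' is \emph{exactly} the statement $V_{\chi^{-1}}\subset \HH_1^{\sigma}(\tSigma;\C)$: by definition $V_{\chi^{-1}}\cap \HH_1^{\sigma}(\tSigma;\C)$ is the span of the projections $e_{\chi^{-1}}[\tgamma_0]$ over all components of preimages of nonseparating simple closed curves, and restricting to $\gamma$ with $\chi(\phi[\gamma])=1$ costs nothing since the other projections vanish. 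So you have reduced the proposition for $\chi$ to the proposition for $\chi^{-1}$, and the entire content is deferred to the Fox-calculus/``abundance of primitive classes'' claim, which is only gestured at. That claim is also genuinely delicate: $e_\chi[\tgamma_0]$ can vanish even when $\gamma$ is primitive and $\chi(\phi[\gamma])=1$. For the $\Z/2$-cover dual to a nonseparating curve $\alpha_1$ (so $\phi=\omega(a_1,\cdot)\bmod 2$ and $\phi[\alpha_1]=0$), the two lifts of $\alpha_1$ are homologous, so $e_\chi[\talpha_{1,1}]=0$ for the nontrivial character. This falsifies your unconditional nonvanishing assertion for $e_\chi[\tgamma_0]$ in the nontrivial-character paragraph, and it shows that controlling \emph{which} curves contribute nonzero, independent twisted classes is precisely the hard part you have not done.

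The paper avoids this by a reduction you don't make: every irreducible $\Q$-representation of the finite abelian deck group factors through a cyclic quotient $\phi\colon G\to\Z/d$, so it suffices to work on the intermediate cyclic cover $\Sigma[\phi]=\tSigma/\ker(\phi)$. That cover has an explicit geometric model ($d$ homeomorphic copies of a subsurface $S$ glued cyclically along the lifts of a curve $\alpha_1$ dual to $\phi$), giving the concrete splitting $\HH_1(\Sigma[\phi];\Q)=\Span{[\talpha_{1,1}],[\tbeta_1]}\oplus\bigoplus_i\HH_1(\tS_i;\Q)$; a three-case analysis then exhibits, for each nonzero class, an explicit lifted Dehn twist (about $\beta_1$, about $\alpha_1$, or about a curve inside $S$) whose powers move it through infinitely many distinct classes. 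If you want to complete your route, you must actually prove the twisted spanning statement; reducing first to cyclic covers as the paper does is likely the only way to make that computation tractable.
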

\begin{proof}
It is enough to prove that $\HH_1^{\sigma}(\tSigma;\Q)^{\perp} = 0$.
During the proof of Theorem \ref{maintheorem:sccsymplectic2}, we showed that
\[\HH_1^{\sigma}(\tSigma;\Q)^{\perp}=\HH_1(\tSigma;\Q)^{D_{\sigma}}.\]
It is thus enough to show that the group $D_{\sigma}$ fixes no nonzero vectors in $\HH_1(\tSigma;\Q)$.
In fact, the $D_{\sigma}$-orbits of all nonzero vectors in $\HH_1(\tSigma;\Q)$
are infinite.  This follows from work of Looijenga \cite{LooijengaPrym}, but since
it is only implicit in \cite{LooijengaPrym} we give a complete proof.

Let $G$ be the deck group of the finite abelian cover $\tSigma \rightarrow \Sigma$.  The actions
of $G$ and $D_{\sigma}$ on $\HH_1(\tSigma;\Q)$ commute, so the action of $D_{\sigma}$ preserves
the decomposition of $\HH_1(\tSigma;\Q)$ into $G$-isotypic components.  Let
$V$ be an irreducible representation of $G$ over $\Q$ and let $W$ be
the $V$-isotypic component of $\HH_1(\tSigma;\Q)$.  We must prove
that the $D_{\sigma}$-orbits of all nonzero vectors in $W$ are infinite.

Since $V$ is an irreducible representation of the finite abelian group $G$, there is a finite
cyclic quotient $\phi\colon G \rightarrow \Z/d$ such that\footnote{Here is a sketch of this standard
fact.  Since $G$ is abelian, the action of $G$ on $V$ comes from a homomorphism $\iota\colon G \rightarrow \End_G(V)$.  Since
$V$ is irreducible, Schur's Lemma says that $\End_G(V)$ is a division algebra over $\Q$.  Let $F$ be the $\Q$-subalgebra of
$\End_G(V)$ generated by $\text{Im}(\iota)$.  It is an easy exercise to show that for $f \in \End_G(V)$ nonzero, $f^{-1}$ can
be expressed as a polynomial in $f$.  It follows that $F$ is closed under taking multiplicative inverses.
Since $G$ is abelian, this implies that $F$ is a commutative division ring, i.e., a field.
The result now follows from the fact that a finite subgroup of $F^{\times}$ like the image of $\iota\colon G \rightarrow \End_G(V)$
must be cyclic.}
action of $G$ on $V$ factors through $\phi$.
Let
\[\Sigma[\phi] = \tSigma / \ker(\phi),\]
and let $\pi\colon \Sigma[\phi] \rightarrow \Sigma$ be the projection, so $\pi\colon \Sigma[\phi] \rightarrow \Sigma$ is the degree-$d$ cyclic cover corresponding to $\ker(\phi)$.  
The group $\ker(\phi)$ acts trivially on $V$ and hence on $W$, so $W$ is a subrepresentation of\footnote{Here the
subscript indicates that we are taking the $\ker(\phi)$-coinvariants and the first isomorphism follows
from the transfer map.}
\[\HH_1(\tSigma;\Q)_{\ker(\phi)} \cong \HH_1(\tSigma/\ker(\phi);\Q) \cong \HH_1(\Sigma[\phi];\Q).\]
Letting $v \in \HH_1(\Sigma[\phi];\Q)$ be nonzero, it is thus enough to prove that the $D_{\sigma}$-orbit
of $v$ is infinite.

Let $\omega$ be the algebraic intersection form on $\HH_1(\Sigma;\Q)$.  There is a surjection $\HH_1(\Sigma;\Z) \rightarrow G$.
Pick a surjection $\tphi\colon \HH_1(\Sigma;\Z) \rightarrow \Z$ making the
diagram
\begin{center}
\begin{tikzcd}
\HH_1(\Sigma;\Z) \arrow[two heads]{r}{\tphi} \arrow[two heads]{d} & \Z \arrow[two heads]{d} \\
G                \arrow[two heads]{r}{\phi}                       & \Z/d
\end{tikzcd}
\end{center}
commute.  Since $\omega$ is a symplectic form on $\HH_1(\Sigma;\Z)$, there
exists some $a_1 \in \HH_1(\Sigma;\Z)$ such that $\tphi(x) = \omega(a_1,x)$ for all $x \in \HH_1(\Sigma;\Z)$.
Since $\tphi$ is surjective, $a_1$ is primitive\footnote{That is, not divisible by any integers
greater than $1$.} and thus there exists some oriented nonseparating simple closed curve $\alpha_1$
such that $[\alpha_1] = a_1$.  Let $\beta_1$ and $S$ be as follows:\\
\centerline{\psfig{file=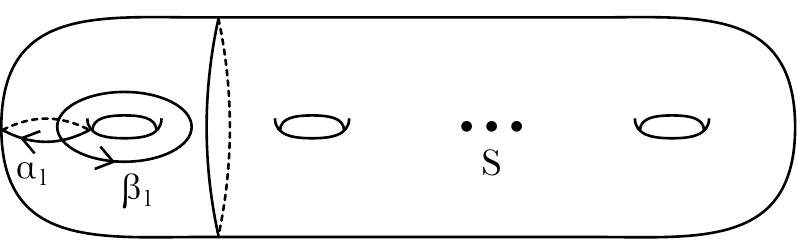,scale=1}}
The case $d=3$ of our cover $\pi\colon \Sigma[\phi] \rightarrow \Sigma$ is then
as follows:\\
\centerline{\psfig{file=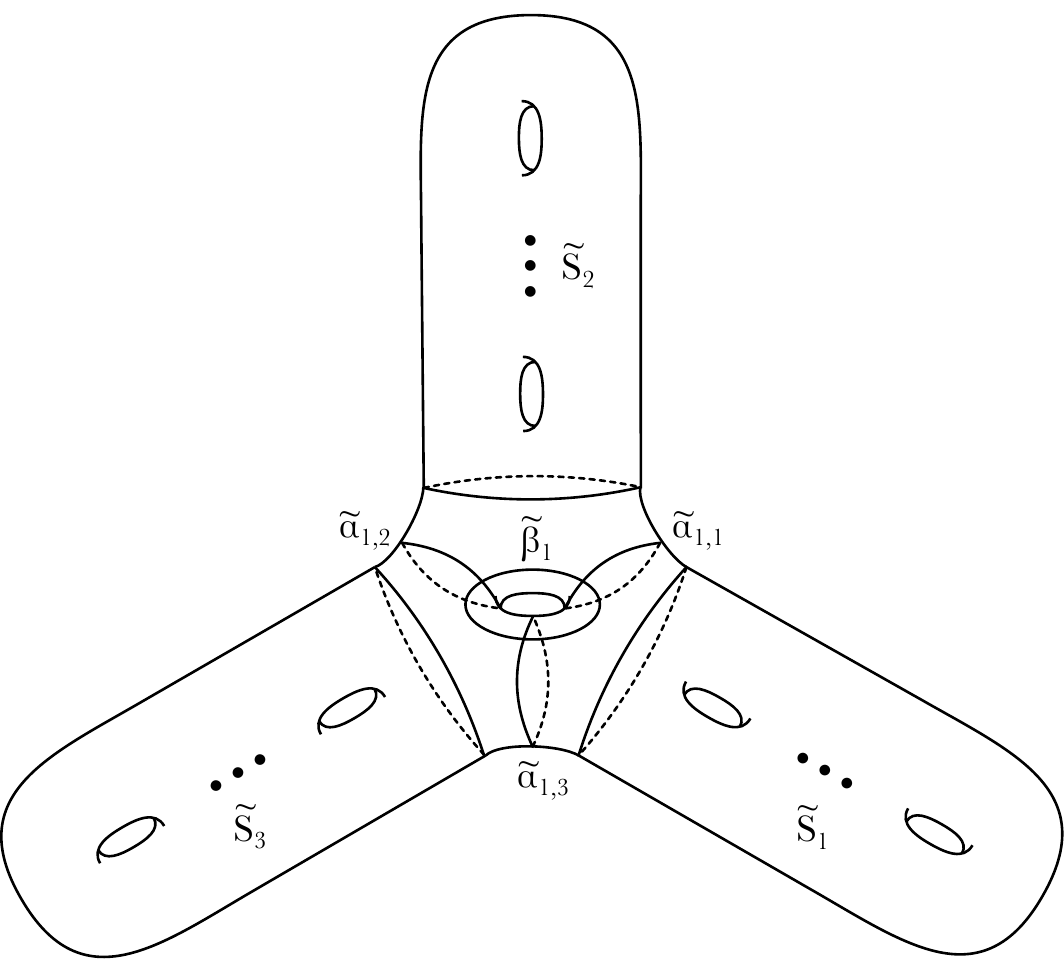,scale=1}}
More generally, we have
\begin{itemize}
\item $\pi^{-1}(S) = \tS_1 \sqcup \cdots \sqcup \tS_d$ with each $\tS_i$ projecting
homeomorphically to $S$; and
\item $\pi^{-1}(\beta_1) = \tbeta_1$, where $\tbeta_1$ is a simple closed curve
that $d$-fold covers $\beta_1$; and
\item $\pi^{-1}(\alpha_1) = \talpha_{1,1} \sqcup \cdots \sqcup \talpha_{1,d}$, where
$\talpha_{1,i}$ is a simple closed curve projecting homeomorphically to $\alpha_1$.
\end{itemize}
The curves $\talpha_{1,i}$ are all homologous, and we have
\[\HH_1(\Sigma[\phi];\Q) = \Span{[\talpha_{1,1}],[\tbeta_1]} \oplus \bigoplus_{i=1}^d \HH_1(\tS_i;\Q).\]
Recall that we are trying to prove that the nonzero $v \in \HH_1(\Sigma[\phi];\Q)$ has
an infinite $D_{\sigma}$-orbit.  In fact, we will find some $\ttau \in D_{\sigma}$ such that
the elements $\Set{$\ttau^n(v)$}{$n \geq 1$}$ are all
distinct.
Write
\[v = \lambda [\talpha_{1,1}] + \nu [\tbeta_1] + \sum_{i=1}^d v_i \quad \text{with $\lambda,\nu \in \Q$ and $v_i \in \HH_1(\tS_i;\Q)$.}\]
There are three cases.

The first is $\lambda \neq 0$.  In this case, $T_{\beta_1}^d \in \Mod(\Sigma)$ lifts to
$T_{\tbeta_1} \in \Mod(\Sigma[\phi])$.  We have
\[T_{\tbeta_1}^n(v) = \lambda [\talpha_{1,1}] + (\nu - n \lambda) [\tbeta_1] + \sum_{i=1}^d v_i \quad \text{for $n \geq 1$}.\]
These are all distinct elements.  Since $D_{\sigma}$ is defined via the lifts to the cover\footnote{Remember
that the degree-$d$ cyclic cover $\Sigma[\phi] \rightarrow \Sigma$ is a subcover of $\tSigma \rightarrow \Sigma$, i.e., the
map $\tSigma \rightarrow \Sigma$ factors as $\tSigma \rightarrow \Sigma[\phi] \rightarrow \Sigma$.}
$\tSigma \rightarrow \Sigma$, there is some $\ell \geq 1$ (necessarily divisible by $d$) such that 
the element $\ttau_{\beta_1} \in D_{\sigma}$
is induced by the lift of $T_{\beta_1}^{\ell} = (T_{\beta_1}^d)^{\ell/d}$.  We conclude that
the elements $\Set{$\ttau_{\beta_1}^n(v)$}{$n \geq 1$}$ are all
distinct.

The second is $\nu \neq 0$.  In this case, $T_{\alpha_1} \in \Mod(\Sigma)$ lifts to
$T_{\talpha_{1,1}} \cdots T_{\talpha_{1,d}} \in \Mod(\Sigma[\phi])$.  Since
the $\talpha_{1,i}$ are all homologous, we have
\[(T_{\talpha_{1,1}} \cdots T_{\talpha_{1,d}})^n(v) = (\lambda + d n \nu) [\talpha_{1,1}] + \nu [\tbeta_1] + \sum_{i=1}^d v_i \quad \text{for $n \geq 1$}.\]
These are all distinct elements.  Just like in the previous case, we conclude that
the elements $\Set{$\ttau_{\alpha_1}^n(v)$}{$n \geq 1$}$ are all
distinct.

The third is that some $v_i$ is nonzero.  Reordering, assume that $v_1 \neq 0$.  Let
$\ov_1 \in \HH_1(S;\Q)$ be the image of $v_1 \in \HH_1(\tS_1;\Q)$.
Pick an oriented simple closed curve $\gamma$ on $S$ with $\omega([\gamma],\ov_1)$
nonzero.  Let $\tgamma_1 \sqcup \cdots \sqcup \tgamma_d$ be the preimage
of $\gamma$ in $\Sigma[\phi]$, ordered such that $\tgamma_i \in \tS_i$.
By construction,
$T_{\gamma} \in \Mod(\Sigma)$ lifts to $T_{\tgamma_1} \cdots T_{\tgamma_d} \in \Mod(\Sigma[\phi])$. 
We have
\[(T_{\tgamma_1} \cdots T_{\tgamma_d})^n(v) = \lambda [\talpha_{1,1}] + \nu [\tbeta_1] + \sum_{i=1}^d (v_i + n \omega([\tgamma_i],v_i) [\tgamma_i]) \quad \text{for $n \geq 1$}.\] 
Since $\omega([\tgamma_1],v_1) = \omega([\gamma],\ov_1) \neq 0$, these are all distinct.  Just like before,
we conclude that the elements $\Set{$\ttau_{\gamma}^n(v)$}{$n \geq 1$}$ are all distinct.
\end{proof}

\subsection{Nilpotent preliminaries}

Before we can prove part (ii) of Theorem \ref{theorem:nonsymplectic}, we need
some preliminary results.  Let $F_n$ be the free group on $\{x_1,\ldots,x_n\}$.  Fix
some $\ell \geq 3$.  Define\footnote{When reading this for the first time, it might be easier
to assume that $\ell$ is odd, so $\hell = \ell$.}
\[\hell = \begin{cases}
\ell & \text{if $\ell$ is odd},\\
\ell/2 & \text{if $\ell$ is even}.
\end{cases}\]
Since $\ell \geq 3$, we have $\hell \geq 2$.  Define $N_n[\ell]$ to be the quotient of $F_n$ by
the normal subgroup generated by the following elements:\footnote{Here ``N'' stands
for ``nilpotent''.}
\begin{itemize}
\item The third term $[F_n,[F_n,F_n]]$ of the lower central series.
\item The subgroup $[F_n,F_n^{\times \hell}]$, i.e., the subgroup generated
by commutators $[u,v^{\hell}]$ as $u$ and $v$ range over elements of $F_n$.
\end{itemize}
We will use boldface letters to denote elements of $N_n[\ell]$, and in particular
will let $\{\bx_1,\ldots,\bx_n\}$ be the generators of $N_n[\ell]$ coming
from the generators $\{x_1,\ldots,x_n\}$ for $F_n$.  The abelianization of $N_n[\ell]$
is $\Z^n$, and for $\bu \in N_n[\ell]$ we will write $\obu \in \Z^n$ for its image
in the abelianization and $\hobu \in (\Z/\hell)^n$ for the image of $\obu$ under the mod-$\hell$ reduction map.

The following lemma clarifies the nature of $N_n[\ell]$:

\begin{lemma}
\label{lemma:nilpotent}
For $n \geq 2$ and $\ell \geq 3$, we have a central extension
\[1 \longrightarrow \wedge^2 (\Z/\hell)^n \longrightarrow N_n[\ell] \longrightarrow \Z^n \longrightarrow 1.\]
Here the map $N_n[\ell] \rightarrow \Z^n$ is the abelianization map taking $\bu \in N_n[\ell]$ to
$\obu \in \Z^n$, and for $\bu,\bv \in N_n[\ell]$ the commutator $[\bu,\bv] \in N_n[\ell]$ is the central element
$\hobu \wedge \hobv \in \wedge^2 (\Z/\hell)^n$.
\end{lemma}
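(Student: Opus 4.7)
The plan is to factor the quotient $F_n \twoheadrightarrow N_n[\ell]$ through the free $2$-step nilpotent group $N := F_n/[F_n, [F_n, F_n]]$ and then impose the remaining relations $[F_n, F_n^{\times \hell}]$ on top.

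First, I would recall the classical structure of $N$. Since $[N, [N, N]] = 1$, all commutators in $N$ are central, so the commutator bracket descends to a bilinear, alternating map on the abelianization $N^{\mathrm{ab}} = \Z^n$. It therefore factors through a surjection $\wedge^2 \Z^n \twoheadrightarrow [N, N]$. Injectivity of this map is a standard fact --- provable via the Magnus expansion $F_n \hookrightarrow \Z\langle\langle t_1, \ldots, t_n \rangle\rangle / (\text{degree} \geq 3)$, or equivalently via strictly upper triangular unipotent matrices over $\Z[t]/(t^3)$ --- and yields the central extension
\[1 \longrightarrow \wedge^2 \Z^n \longrightarrow N \longrightarrow \Z^n \longrightarrow 1, \qquad [\bu, \bv] = \obu \wedge \obv.\]

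Next, $N_n[\ell]$ is the further quotient of $N$ by the image of $[F_n, F_n^{\times \hell}]$. The key computational input is that in any $2$-step nilpotent group, centrality of commutators gives the bilinearity identity $[u, vw] = [u,v][u,w]$, from which $[u, v^k] = [u, v]^k$ follows by induction. Applied to $k = \hell$ and combined with the identification $[N, N] \cong \wedge^2 \Z^n$, each generator $[u, v^{\hell}]$ maps to $\hell \cdot (\obu \wedge \obv)$. These elements are central in $N$, so the normal subgroup they generate is exactly $\hell \cdot \wedge^2 \Z^n$. Quotienting yields a central extension
\[1 \longrightarrow \wedge^2 \Z^n / \hell \wedge^2 \Z^n \longrightarrow N_n[\ell] \longrightarrow \Z^n \longrightarrow 1,\]
in which the right-hand map is the abelianization (every relation we imposed on $F_n$ was a commutator, hence trivial in $F_n^{\mathrm{ab}} = \Z^n$). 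The natural isomorphism $\wedge^2 \Z^n / \hell \wedge^2 \Z^n \cong \wedge^2 (\Z/\hell)^n$ --- both are free $\Z/\hell$-modules of rank $\binom{n}{2}$ with corresponding bases $e_i \wedge e_j$ for $i < j$ --- identifies the kernel with $\wedge^2 (\Z/\hell)^n$ and sends $\obu \wedge \obv$ to $\hobu \wedge \hobv$, establishing the claimed commutator formula.

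The only genuinely nontrivial step is the injectivity of the commutator map $\wedge^2 \Z^n \to [N,N]$ in the analysis of $N$. This requires an explicit embedding of the free $2$-step nilpotent group into an overgroup where the commutators $[x_i,x_j]$ are visibly $\Z$-linearly independent (Magnus expansion or matrix model). Once this is granted, everything else --- the reduction to $\hell \cdot \wedge^2 \Z^n$, the base change to $\Z/\hell$, and the identification of the right-hand quotient as the abelianization --- is direct manipulation of bilinear identities in $2$-step nilpotent groups.
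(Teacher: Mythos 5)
Your proposal is correct and follows essentially the same route as the paper: both pass through the free $2$-step nilpotent quotient $F_n/[F_n,[F_n,F_n]]$, invoke the Magnus--Witt identification of $[F_n,F_n]/[F_n,[F_n,F_n]]$ with $\wedge^2\Z^n$, and observe that the image of $[F_n,F_n^{\times\hell}]$ is exactly $\hell\cdot\wedge^2\Z^n$, the kernel of $\wedge^2\Z^n\to\wedge^2(\Z/\hell)^n$. The only difference is presentational: the paper cites Magnus--Witt for the key isomorphism, whereas you sketch its standard proof via the Magnus expansion.
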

\begin{proof}
It is immediate from Magnus--Witt's work on the lower central series of a free group (\cite{Magnus, Witt}; see
\cite{SerreLie} for a textbook account) that 
\[\frac{[F_n,F_n]}{[F_n,[F_n,F_n]]} \cong \wedge^2 \Z^n,\]
with $[u,v] \in [F_n,F_n]$ mapping to $\ou \wedge \ov \in \wedge^2 \Z^n$.  Here $\ou,\ov \in \Z^n$ are the
images of $u,v \in F_n$ in its abelianization.  This fits into a
central extension
\[1 \longrightarrow \wedge^2 \Z^n \longrightarrow \frac{F_n}{[F_n,[F_n,F_n]]} \longrightarrow \Z^n \longrightarrow 1.\]
To get $N_n[\ell]$ from the middle group in this extension, one quotients out the image of
$[F_n,F_n^{\times \hell}]$, which maps to the kernel of the map
\[\wedge^2 \Z^n \longrightarrow \wedge^2 (\Z/\hell)^n.\]
The lemma follows.
\end{proof}

In the rest of this section, we will identify $\wedge^2 (\Z/\hell)^n$ with the corresponding central subgroup
of $N_n[\ell]$.  The following calculation lies at the heart of our arguments:

\begin{lemma}
\label{lemma:powerlemma}
For $n \geq 2$ and $\ell \geq 3$, we have
$(\bu \bv)^{\ell} = \bu^{\ell} \bv^{\ell}$ for all $\bu,\bv \in N_n[\ell]$.
\end{lemma}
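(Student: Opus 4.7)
The plan is to exploit that $N_n[\ell]$ is $2$-step nilpotent by Lemma \ref{lemma:nilpotent}, so commutators are central. In any such group the standard expansion formula
\[
(\bu\bv)^k \;=\; \bu^k \bv^k \, [\bv,\bu]^{\binom{k}{2}}
\]
holds. I would first establish this formula by induction on $k$: the base case $k=1$ is trivial, and the inductive step uses only the relation $\bv\bu = \bu\bv\,[\bv,\bu]$ together with the fact that $[\bv,\bu]$ is central, allowing us to push all commutators to the right and collect them. The key combinatorial point is that in moving $\bv^{k}$ past $\bu$ one picks up $k$ extra commutator factors, giving the triangular number $\binom{k}{2}$ when summed over the inductive step.

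Next I would specialize to $k=\ell$ and show that the commutator factor $[\bv,\bu]^{\binom{\ell}{2}}$ is trivial. Since $[\bv,\bu] = \hobv \wedge \hobu \in \wedge^2(\Z/\hell)^n$ by Lemma \ref{lemma:nilpotent}, this element has order dividing $\hell$, so it suffices to verify that $\hell \mid \binom{\ell}{2} = \ell(\ell-1)/2$. This splits into the two cases in the definition of $\hell$: if $\ell$ is odd, then $\hell = \ell$ and $(\ell-1)/2$ is an integer, so $\binom{\ell}{2} = \ell \cdot (\ell-1)/2$ is divisible by $\ell$; if $\ell$ is even, then $\hell = \ell/2$ and $\binom{\ell}{2} = (\ell/2)(\ell-1) = \hell(\ell-1)$, again divisible by $\hell$.

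There is no real obstacle here — the only mild subtlety is the case split forced by the asymmetric definition of $\hell$, which is precisely what makes the claim true for both parities of $\ell$. The whole proof should fit in a few lines once the expansion formula is in hand.
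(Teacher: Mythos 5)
Your proposal is correct and follows essentially the same route as the paper: expand $(\bu\bv)^{\ell}$ using that commutators are central to get the factor $[\bv,\bu]^{\ell(\ell-1)/2}$, then check that $\hell$ divides $\binom{\ell}{2}$ in both parity cases (the paper relegates this last divisibility check to a footnote, but it is the same observation).
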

\begin{proof}
To transform $(\bu \bv)^{\ell}$ into $\bu^{\ell} \bv^{\ell}$, we must commute each $\bu$ past all the
$\bv$ terms to its left.  Each time we commute a $\bu$ past a $\bv$, we must introduce a commutator
$[\bv,\bu] = \hobv \wedge \hobu$.  This commutator is central, so it can moved all the way to the right.
The first $\bu$ must be commuted with $0$ copies of $\bv$, the second with $1$ copy of $\bv$, the third with $2$ copies of $\bv$, etc.
In the end, we see that
\[(\bu \bv)^{\ell} = \bu^{\ell} \bv^{\ell} [\bv,\bu]^{0+1+2+\cdots+(\ell-1)} = \bu^{\ell} \bv^{\ell} [\bv,\bu]^{\ell(\ell-1)/2}.\]
Whether $\ell$ is even or odd,\footnote{The purpose of using $\hell$ is to ensure this.} the integer
$\ell(\ell-1)/2$ is divisible by $\hell$.  Since $[\bv,\bu] \in \wedge^2 (\Z/\hell)^n$, this implies
that $[\bv,\bu]^{\ell(\ell-1)/2} = 1$.  The lemma follows.
\end{proof}

Define $P_n[\ell]$ to be\footnote{Here ``P'' stands for ``power subgroup''.} the subgroup of $N_n[\ell]$ generated by 
$\Set{$\bu^\ell$}{$\bu \in N_n[\ell]$}$
and define $A_n[\ell]$ to be the subgroup\footnote{Here ``A'' stands for ``abelian subgroup''; see Lemma \ref{lemma:powersubgroup}.} of $N_n[\ell]$ 
generated by $P_n[\ell]$ and $\wedge^2 (\Z/\hell)^n$.  We then have:

\begin{lemma}
\label{lemma:powersubgroup}
For $n \geq 2$ and $\ell \geq 3$, the subgroup $P_n[\ell]$ is a central subgroup of $N_n[\ell]$ with $P_n[\ell] \cong \Z^n$, and
$A_n[\ell] = P_n[\ell] \times \wedge^2 (\Z/\hell)^n$.
\end{lemma}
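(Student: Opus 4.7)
The plan is to use Lemma \ref{lemma:powerlemma} to show that the $\ell$-th power map on $N_n[\ell]$ is a group homomorphism, and then to analyze its image using the central extension structure from Lemma \ref{lemma:nilpotent}. First I would observe that since $(\bu\bv)^{\ell} = \bu^{\ell}\bv^{\ell}$, the map $\rho\colon N_n[\ell] \to N_n[\ell]$ sending $\bu \mapsto \bu^{\ell}$ is a homomorphism, and hence $P_n[\ell]$ is just the image of $\rho$ (not merely the subgroup generated by it).

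Next, I would show $P_n[\ell]$ is central. By Lemma \ref{lemma:nilpotent}, for all $\bu,\bv \in N_n[\ell]$ the commutator $[\bu,\bv]$ depends only on $\hobu,\hobv \in (\Z/\hell)^n$, and is bilinear in those arguments. Since $\overline{\bu^{\ell}} = \ell\obu$, reducing mod $\hell$ gives $\widehat{\overline{\bu^{\ell}}} = \ell \hobu$; and since $\hell \mid \ell$ (both when $\ell$ is odd and even), this vanishes in $(\Z/\hell)^n$. Hence $[\bu^{\ell},\bv] = 0$ for all $\bv$, so $P_n[\ell]$ lies in the center.

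For the structural claim $P_n[\ell] \cong \Z^n$, I would compose $\rho$ with the abelianization $N_n[\ell] \to \Z^n$. This sends $\bu^{\ell}$ to $\ell\obu$, so the image of $P_n[\ell]$ in $\Z^n$ is $\ell\Z^n \cong \Z^n$. Injectivity follows because if $\bu^{\ell}$ lies in the kernel $\wedge^2(\Z/\hell)^n$ of the abelianization, then $\ell\obu = 0$ in $\Z^n$, forcing $\obu = 0$, so $\bu \in \wedge^2(\Z/\hell)^n$; but every element there has order dividing $\hell$, and since $\hell \mid \ell$, we get $\bu^{\ell} = 1$. So $\rho$ is injective on $P_n[\ell]$, yielding the isomorphism.

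Finally, for $A_n[\ell] = P_n[\ell] \times \wedge^2(\Z/\hell)^n$: both factors are central, so the subgroup they generate is abelian and equals their internal product. The intersection $P_n[\ell] \cap \wedge^2(\Z/\hell)^n$ is trivial by the exact same injectivity argument as above, giving the direct product decomposition. There is no real obstacle here — once Lemmas \ref{lemma:nilpotent} and \ref{lemma:powerlemma} are in hand, the whole proof reduces to tracking images and kernels in the central extension, with the key numerical input being $\hell \mid \ell$.
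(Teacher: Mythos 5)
Your proof is correct and follows essentially the same route as the paper: centrality via $\ell \cdot \bigl(\wedge^2 (\Z/\hell)^n\bigr) = 0$, the free abelian structure of $P_n[\ell]$ detected through the abelianization, and triviality of $P_n[\ell] \cap \wedge^2 (\Z/\hell)^n$ because $\ell$-th powers kill the $\hell$-torsion center. The one small (and nice) difference is that you package Lemma \ref{lemma:powerlemma} as saying the $\ell$-th power map is an endomorphism, so that $P_n[\ell]$ is literally its image; the paper instead writes a general $\bu$ in the normal form $\bx_1^{k_1} \cdots \bx_n^{k_n} \bc$ and applies Lemma \ref{lemma:powerlemma} repeatedly to identify $P_n[\ell]$ with the explicit subgroup $\Set{$\bx_1^{\ell k_1} \cdots \bx_n^{\ell k_n}$}{$k_i \in \Z$} --- the two verifications carry the same content.
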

\begin{proof}
The fact that $P_n[\ell]$ is a central subgroup follows from the fact that
\[[\bu^{\ell},\bv] = \hobu^{\ell} \wedge \hobv = \ell \left(\hobu \wedge \hobv\right) = 0 \quad \text{for all $\bu,\bv \in N_n[\ell]$}.\]
Recall that $N_n[\ell]$ is generated by the elements $\bx_1,\ldots,\bx_n$, which map to a basis for the abelianization
$\Z^n$.  The elements $\bx_i^{\ell} \in N_n[\ell]$ are central and map to linearly independent elements in the abelianization, so
\[P'_n[\ell] = \Set{$\bx_1^{\ell k_1} \cdots \bx_n^{\ell k_n}$}{$k_1,\ldots,k_n \in \Z$}\]
is a central subgroup satisfying $P'_n[\ell] \cong \Z^n$.  Moreover, letting $A'_n[\ell]$ be the subgroup of $N_n[\ell]$ generated
by $P'_n[\ell]$ and $\wedge^2 (\Z/\hell)^n$, we clearly have $A'_n[\ell] = P'_n[\ell] \times \wedge^2 (\Z/\hell)^n$.

To prove the lemma, it is therefore enough to prove that $P_n[\ell] = P'_n[\ell]$.  Since $\bx_i^{\ell} \in P_n[\ell]$ for all
$1 \leq i \leq n$, we have $P'_n[\ell] \subset P_n[\ell]$.  For the reverse inclusion, consider some $\bu \in N_n[\ell]$.
We must prove that $\bu^{\ell} \in P'_n[\ell]$.  We can find $k_1,\ldots,k_n \in \Z$ and
\[\bc \in [P_n[\ell],P_n[\ell]] = \wedge^2 (\Z/\hell)^n\]
such that
$\bu = \bx_1^{k_1} \cdots \bx_n^{k_n} \bc$.
Applying Lemma \ref{lemma:powerlemma} repeatedly, we deduce that
\[\bu^{\ell} = \bx_1^{\ell k_1} \cdots \bx_n^{\ell k_n} \bc^{\ell} = \bx_1^{\ell k_1} \cdots \bx_n^{\ell k_n} \in P'_n[\ell].\qedhere\]
\end{proof}

\subsection{Integral inequality}

We now prove part (ii) of Theorem \ref{theorem:nonsymplectic}

\begin{proof}[Proof of Theorem \ref{theorem:nonsymplectic}, part (ii)]
We first recall the statement.  Let $\Sigma$ be a closed oriented surface of genus $g \geq 2$ 
and $\sigma$ be the set of nonseparating simple closed curves on $\Sigma$.  Fix some $\ell \geq 3$, and
as above let
\[\hell = \begin{cases}
\ell & \text{if $\ell$ is odd},\\
\ell/2 & \text{if $\ell$ is even}.
\end{cases}\]
Since $\ell \geq 3$, we have $\hell \geq 2$.
We must prove that $\HH_1^{\sigma}(\Sigma[\ell];\Z) \neq \HH_1(\Sigma[\ell];\Z)$.

Recall that $\Sigma[\ell]$ is the cover corresponding to the homomorphism
\[\pi_1(\Sigma) \rightarrow \HH_1(\Sigma;\Z/\ell) \cong (\Z/\ell)^{2g}.\]
It follows that $\pi_1(\Sigma[\ell])$ is the kernel of this map, so $\pi_1(\Sigma[\ell])$ is the subgroup
of $\pi_1(\Sigma)$ generated by the following two subgroups:
\begin{itemize}
\item The commutator subgroup $[\pi_1(\Sigma),\pi_1(\Sigma)]$.
\item The subgroup $P$ generated by $\Set{$x^{\ell}$}{$x \in \pi_1(\Sigma)$}$.
\end{itemize}
Each nonseparating simple closed curve $x \in \pi_1(\Sigma)$ maps to a primitive\footnote{That is, not divisible by any integers except $\pm 1$.} element of $\HH_1(\Sigma;\Z)$, so the minimal power of $x$ that lies in $\pi_1(\Sigma[\ell])$ is $x^{\ell}$.  It follows
that the image $\oP$ of
$P$ in $\HH_1(\Sigma[\ell];\Z)$ contains $\HH_1^{\sigma}(\Sigma[\ell];\Z)$.  It is enough therefore
to prove that $\oP \neq \HH_1(\Sigma[\ell];\Z)$.

Let $\{a_1,b_1,\ldots,a_g,b_g\}$ be the standard generating set for $\pi_1(\Sigma)$ satisfying
the surface relation $[a_1,b_1]\cdots[a_g,b_g]=1$.  We can then define a homomorphism
$\phi\colon \pi_1(\Sigma) \rightarrow N_g[\ell]$ via the formulas
\[\phi(a_i) = \bx_i \quad \text{and} \quad \phi(b_i) = 1 \quad \text{for $1 \leq i \leq g$}.\]
The map $\phi$ takes $[\pi_1(\Sigma),\pi_1(\Sigma)]$ to the central subgroup
$\wedge^2 (\Z/\hell)^g$ and $P$ to the central subgroup $P_g[\ell]$ (see Lemma \ref{lemma:powersubgroup}).  It follows that $\phi$
takes $\pi_1(\Sigma[\ell])$ surjectively onto the abelian subgroup 
$A_g[\ell] = P_g[\ell] \times \wedge^2 (\Z/\hell)^g$
identified by Lemma \ref{lemma:powersubgroup}.
The restriction of $\phi$ to $\pi_1(\Sigma[\ell])$ thus factors through $\HH_1(\Sigma[\ell];\Z)$, and takes
$\oP \subset \HH_1(\Sigma[\ell];\Z)$ to the proper subgroup $P_g[\ell]$ of $A_g[\ell]$.  The theorem follows.
\end{proof}

\end{document}